\theoremstyle{plain}
\newtheorem{theorem}{Theorem}[section]
\newtheorem{lemma}[theorem]{Lemma}
\newtheorem{corollary}[theorem]{Corollary}
\newtheorem{assumption}[theorem]{Assumption}
\theoremstyle{definition}
\theoremstyle{remark}
\newtheorem{remark}{Remark}
\newcommand*{\R}{{\mathbb R}}
\def\bx{\mathbf x}
\def\by{\mathbf y}
\def\bz{\mathbf z}
\def\mW{\mathbf W}
\def\mI{\mathbf I}
\def\cG{\mathcal G}
\def\cE{\mathcal E}
\def\cV{\mathcal V}
\def\cL{\mathcal L}
\def\eps{\varepsilon}
\DeclareMathOperator*{\argmin}{argmin}
\DeclareMathOperator*{\argmax}{argmax}
\DeclareMathOperator{\col}{col}
\DeclareMathOperator{\kernel}{Ker}
\newcommand{\norm}[1]{\left\| #1 \right\|}
\newcommand{\normtwo}[1]{\left\| #1 \right\|_2}
\newcommand{\angles}[1]{\left\langle #1 \right\rangle}
\newcommand{\ds}{\displaystyle}
\begin{document}

\articletype{ARTICLE TEMPLATE}

\title{The Mirror-Prox Sliding Method for Non-smooth decentralized saddle-point problems \footnote{The work was supported by a grant for research centers in the field of artificial intelligence, provided by the Analytical Center for the Government of the Russian Federation in accordance with the subsidy agreement (agreement identifier 000000D730321P5Q0002) and the agreement with the Ivannikov Institute for System Programming of the Russian Academy of Sciences dated November 2, 2021 No. 70-2021-00142.
}
}

\author{
\name{Ilya Kuruzov\textsuperscript{a,b}\thanks{CONTACT Ilya Kuruzov. Email: kuruzov.ia@phystech.edu } and Alexander Rogozin\textsuperscript{a} and Demyan Yarmoshik\textsuperscript{a} and Alexander Gasnikov\textsuperscript{a,c,d}}
\affil{
\textsuperscript{a}Moscow Institute of Physics and Technology, Dolgoprudny, Russia;
\textsuperscript{b}Institute for Information Transmission Problems, Moscow, Russia;
\textsuperscript{c}Skolkovo Institute of Science and Technology, Moscow, Russia;
\textsuperscript{d}ISP RAS Research Center for Trusted Artificial Intelligence
}
}

\maketitle

\begin{abstract}
The saddle-point optimization problems have a lot of practical applications. This paper focuses on such non-smooth convex-concave problems in decentralized case. This work is based on recently proposed sliding for centralized variational inequalities. Through 
specific penalization method and this sliding we obtain algorithm for non-smooth decentralized saddle-point problems. Note, the proposed method approaches lower bounds both for number of communication rounds and calls of (sub-)gradient per node. As a helpful result, we obtain the convergence of sliding for some non-smooth variational inequalities.
\end{abstract}

\begin{keywords}
Distributed optimization; non-smooth optimization; saddle-point problem; variational inequalities.
\end{keywords}

\section{Introduction}
In this paper we study decentralized non-smooth saddle point problems (SPP). Practical importance of the non-smooth SPP setting is explained by its applications, which include Total Variation image processing problems \cite{drori2015simple} and Wasserstein Barycenter problem, see e.g. \cite{yufereva2022decentralized}.
In turn, decentralization is a popular and effective \cite{lian2017can} approach for handling large-scale problems.

To date the theory of decentralized optimization and decentralized saddle point problems can be considered quite well established. Algorithms that match lower bounds are known for almost all of these problems subclasses \cite{gorbunov2022recent}. So, $\mu$-strongly convex case with $L$-smooth functional was considered in work \cite{opt_alg_beznos}. The authors proved lower bounds for this case and obtained optimal algorithm in deterministic case.  The case of non-strongly convex smooth case was researched in work \cite{opt_alg_l_smooth}. Authors proposed an algorithm based on Extra-Step method  and proved its optimality in this case with respect to oracle calls.


At the same time, the case of non-smooth SPP was left uncovered by recent research. Note, that saddle-point problems arise in different applications, e.g. fair training \cite{Baraz_non_convex_minimax}, adversarial machine learning \cite{Goodfellow_scale}, Generative adversarial network \cite{Goodfellow}. Some of such problems also include non-smooth object functions. In particular, it is problems with $\ell_1$-penalization \cite{sparse_gan}, \cite{sparse_dictionary} and some statements of Support Vector Machine \cite{svm_leon}. The significant part of such problems should be solved over several devices. So, our goal is to close this gap
by providing an algorithm for decentralized non-smooth SPP, which is optimal in terms of oracle calls and gossip steps.


To obtain such optimal algorithm, we use penalization for initial problem to avoid consensus constraint. This idea was developed in \cite{dvinskikh2019decentralized,gorbunov2019optimal}. It allows considering non-smooth problem without additional constraints. Further, it can be presented as variational inequality. In \cite{gorbunov2022recent} for non-smooth (not saddle point) optimization this idea was used with sliding from work \cite{lan2020first}. So, in this work we want to apply this penalization with another sliding procedure proposed in \cite{Lan_MPS}. 

Note, that the both parts of operator in variational inequality should be smooth for the original method from \cite{Lan_MPS}. To apply sliding to non-smooth problems we consider the condition similar to relative smoothness. The main problem of such approach is to prove that the error is not accumulated. We generalize the original results for new condition in this work and unite them with ideas of penalization.


{\textbf{Problem statement}. In this paper we consider a saddle point problem
\begin{equation*}
    \begin{aligned}
	    \min_{x \in X} \max_{y \in Y} \left[f(x, y)=\frac{1}{m} \sum_{i=1}^m f_i(x, y)\right], 
	\end{aligned}
\end{equation*}
where $X, Y$ are closed convex bounded sets and $f_i(x_i, y_i)$ are convex-concave non-smooth functions. Each $f_i$ is stored at a separate computational agent. The agents do not have a central aggregating server, but they can communicate to each other via a communication network. The network is represented by a connected undirected graph $\cG = (\cV, \cE)$, and the agents can exchange information if and only if the corresponding nodes are connected by an edge.
}

{
\textbf{Notations.} By $\|\cdot\|$ we mean the $\ell_2$-norm, i.e. $\|x\|=\sqrt{\langle x, x\rangle}.$ For a given convex-concave function $F(x,y)$ we let $\nabla_x F(x,y)$ be some subgradient of function $F(\cdot, y)$ at point $x$ and $\nabla_y F(x,y)$ be some subgradient of function $-F(x, \cdot)$ multiplied by $-1$. We also let $\otimes$ denote the Kronecker product. For vectors $x_1, \ldots, x_m\in\R^d$ we define $\col(x_1, \ldots, x_m) = [x_1^\top\ldots x_m^\top]^\top$ the column-stacked vector consisting of them. We introduce a prox-function $d(x)$ that is $1$-strongly convex w.r.t. $\norm{\cdot}$, and a Bregman divergence $V(x, y) = d(x) - d(y) - \angles{\nabla d(y), x - y}$ (see \cite{bregman_orig}).
}

{
\textbf{Paper organization}.
In Section \ref{sec:MPS} we generalize theoretical results for Mirror-Prox Sliding (MPS) method for the non-smooth unconstrained SPP in deterministic and stochastic setups. This is done by adding inexactness to the analysis of MPS method.
In Section \ref{sec:decentr_spp} we reformulate the constrained SPP as an unconstrained SPP via penalization.
The properties of the reformulation are analyzed in Section \ref{sec:spp_penalty}.
In Section \ref{sec:main_result} we formulate the main result about convergence of sliding for decentralized saddle-point problem. Finally, we present a discussion in Section~\ref{sec:discussion} and make concluding remakrs in Section~\ref{sec:conclusion}.
}

\section{The mirror-prox sliding method with inexactness}\label{sec:MPS}

In this section we restrict our attention to the mirror-prox sliding method with inexactness. The source of inexactness is the non-smoothness of the field (we discuss it further in the paper in Remark \ref{remark:assumption_examples}). We revisit the analysis of \cite{Lan_MPS} with a modification (additional term $\delta$) that describes the inexactness. Namely, consider the problem of the following variational inequality (VI) over convex closed set $Z$:
\begin{equation}\label{vi_problem}
    \langle H (z)+\nabla G(z), z^*-z\rangle \leq 0~~~ \forall z \in Z,
\end{equation}
where $H(z)$ is a monotone field and $G(z)$ is a differentiable convex function. 
{Formally, we impose the following assumptions.
\begin{assumption}\label{assum:field_h}
	Field $H(z)$ is monotone, i.e. for any $z_1, z_2\in Z$ it holds
	\begin{align*}
		\angles{H(z_2) - H(z_1), z_2 - z_1}\geq 0.
	\end{align*}
	Moreover, there exist constants $M > 0$ and $\delta> 0$ such that for any $z_1, z_2, z_3\in Z$ we have
	\begin{align}
		\label{delta_L_H}
		\langle H(z_1)-H(z_2), z_1-z_3\rangle \leq \frac{M}{2}\| z_1- z_2\|^2+\frac{M}{2}\|z_1-z_3\|^2+\delta.
	\end{align}
\end{assumption}
Note that in Assumption~\ref{assum:field_h} term $\delta$ measures the inexactness. This inexactness emerges from non-smoothness of $H$. We discuss sufficient conditions for the inequality above to hold below in the paper (see Remark \ref{remark:assumption_examples}).
\begin{assumption}\label{assum:function_g}
	Function $G(z)$ is convex and $L$-smooth, i.e.
	\begin{align*}
		0\leq G(z_2) - G(z_1) - \angles{\nabla G(z_1), z_2 - z_1}\leq \frac{L}{2}\norm{z_2 - z_1}_2^2.
	\end{align*}
\end{assumption}
Assumption~\ref{assum:function_g} is standard for optimization. In decentralized case, function $G$ refers to penalty terms that are added to control consensus accuracy.
}

We consider proposed in recent work \cite{Lan_MPS} the mirror-prox sliding method in deterministic (see Algorithm \ref{alg:mps_deterministic}) and stochastic cases (see Algorithm \ref{alg:mps_stochastic}). Similarly to \cite{Lan_MPS}, in order to evaluate efficiency of methods we define:
\begin{align*}
	Q(z_1, z_2) = G(z_1)-G(z_2) + \langle H(z_2), z_1-z_2\rangle.
\end{align*}
Note that $G(z)$ is convex and therefore if $Q(z_1, z_2)\leq 0$, then $z_1$ is a solution of VI~\eqref{vi_problem}. We measure solution accuracy at point $\hat z$ via dual gap: $\hat z$ is an $\eps$-accurate solution to problem~\eqref{vi_problem} if $\sup_{z\in Z}~ Q(z_1, z_2)\leq\eps$.

In the rest of this section we revisit the proofs from \cite{Lan_MPS} under Assumption~\ref{assum:field_h}. The difference with \cite{Lan_MPS} is the additional inexactness, i.e. additional term $\delta$ in \eqref{delta_L_H}. The convergence result for Algorithm~\ref{alg:mps_deterministic} (deterministic) is provided in Theorem~\ref{theorem_mps_deterministic} and the result for Algorithm~\ref{alg:mps_stochastic} (stochastic) is given in Theorem~\ref{theorem_mps_stochastic}. We begin the proof with a sequence of lemmas starting from deterministic case.

\vspace{1.5cm}
\noindent\textbf{{Deterministic case}}. 
\begin{algorithm}[H]
	\caption{The mirror-prox sliding (MPS) method for solving  VI \eqref{vi_problem}} \label{alg:mps_deterministic}
	\begin{algorithmic}[1]
		\REQUIRE Initial point $z_0 \in Z$, external steps number $N$,  sequence on internal steps number $\{T^k\}_{k=1}^n$, sequences of parameters $\{\beta_k\}_{k=1}^n, \{\eta_k^t\}_{t\leq T_k, k\leq N}, \{\gamma_k\}_{k=1}^N$
		
		\STATE $\overline{z}_0:=z_0$
		\FOR{$k =0, \ldots, N$}
		\STATE Compute $\underline{z}_k=(1-\gamma_k)\overline{z}_{k-1}+\gamma_k z_{k-1}$ and set $z_k^0 = z_{k-1}.$
		\FOR{$t =1, \ldots, T_k$}
		\STATE Compute
		$$\tilde{z}_k^t = \argmin\limits_{z\in Z} \langle \nabla G(\underline{z}_k)+H(z_k^{t-1}), z\rangle + \beta_k V(z_{k-1}, z) + \eta_k^t V(z_k^{t-1}, z),$$        
		$${z}_k^t = \argmin\limits_{z\in Z} \langle \nabla G(\underline{z}_k)+H(\tilde{z}_k^t), z\rangle + \beta_k V(z_{k-1}, z) + \eta_k^t V(z_k^{t-1}, z).$$
		\ENDFOR 
		\STATE Set $z_k:=z_k^{T_k}, \tilde{z}_k:=\frac{1}{T_k}\sum\limits_{t=1}^{T_k} \tilde{z}_k^t$ and compute $\overline{z}_k=(1-\gamma_k)\overline{z}_{k-1}+\gamma_k\tilde{z}_k.$
		\ENDFOR 
		
		\RETURN $\overline{z}_N.$
	\end{algorithmic}
\end{algorithm}

Let us analyze MPS method in the deterministic case. Firstly, Lemma 2.1 from work \cite{Lan_MPS} does not require some properties of operator $H$. So, the result is true for non-lipschitz operator too. So, we will use it in the following form:
\begin{lemma}
\label{lemma:ext_determenistic}
For any $\gamma_k \in [0, 1]$, we have
\begin{align*}
    &\left[G(\overline{z}_k)-G(z)+\langle H(z), \overline{z}_k-z\rangle\right]\\
    & -(1-\gamma_k)\left[G(\overline{z}_{k-1})-G(z)+\langle H(z), \overline{z}_{k-1}-z\rangle\right]\\
    \leq & \gamma_k\left[\langle \nabla G(\underline{z}_k)+ H(z), \tilde{z}_k-z \rangle+\frac{L\gamma_k}{2} \|\tilde{z}_k-z_{k-1}\|^2\right].
\end{align*}
\end{lemma}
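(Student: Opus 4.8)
The plan is to observe first that the operator $H$ enters the claimed inequality only through the fixed vector $H(z)$ at the comparison point $z$, so it contributes a single linear functional; consequently the non-smoothness assumption \eqref{delta_L_H} plays no role here, and the whole argument rests on convexity and $L$-smoothness of $G$ together with the two averaging identities built into Algorithm \ref{alg:mps_deterministic}.

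First I would record the key identities. Since $\overline{z}_k=(1-\gamma_k)\overline{z}_{k-1}+\gamma_k\tilde{z}_k$ and $\underline{z}_k=(1-\gamma_k)\overline{z}_{k-1}+\gamma_k z_{k-1}$, subtraction gives $\overline{z}_k-\underline{z}_k=\gamma_k(\tilde{z}_k-z_{k-1})$, and also $\overline{z}_k-(1-\gamma_k)\overline{z}_{k-1}-\gamma_k z=\gamma_k(\tilde{z}_k-z)$. The second identity shows that the $\langle H(z),\cdot\rangle$-part of the left-hand side is exactly $\gamma_k\langle H(z),\tilde{z}_k-z\rangle$, which already appears in the target right-hand side. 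Hence it suffices to prove $G(\overline{z}_k)-(1-\gamma_k)G(\overline{z}_{k-1})-\gamma_k G(z)\le \gamma_k\langle \nabla G(\underline{z}_k),\tilde{z}_k-z\rangle+\tfrac{L\gamma_k^2}{2}\|\tilde{z}_k-z_{k-1}\|^2$, after which one adds $\gamma_k\langle H(z),\tilde{z}_k-z\rangle$ to both sides.

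For that inequality I would apply $L$-smoothness of $G$ at $\underline{z}_k$, namely $G(\overline{z}_k)\le G(\underline{z}_k)+\langle\nabla G(\underline{z}_k),\overline{z}_k-\underline{z}_k\rangle+\tfrac{L}{2}\|\overline{z}_k-\underline{z}_k\|^2$, and use the first identity to rewrite the quadratic term as $\tfrac{L\gamma_k^2}{2}\|\tilde{z}_k-z_{k-1}\|^2$. Then I would split the inner product along $\overline{z}_k-\underline{z}_k=(1-\gamma_k)(\overline{z}_{k-1}-\underline{z}_k)+\gamma_k(\tilde{z}_k-\underline{z}_k)$: on the first piece use the subgradient inequality $\langle\nabla G(\underline{z}_k),\overline{z}_{k-1}-\underline{z}_k\rangle\le G(\overline{z}_{k-1})-G(\underline{z}_k)$, and on the second piece write $\tilde{z}_k-\underline{z}_k=(\tilde{z}_k-z)+(z-\underline{z}_k)$ and bound $\langle\nabla G(\underline{z}_k),z-\underline{z}_k\rangle\le G(z)-G(\underline{z}_k)$. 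Collecting terms, the $G(\underline{z}_k)$ contributions cancel because $1-(1-\gamma_k)-\gamma_k=0$, leaving precisely $G(\overline{z}_k)\le (1-\gamma_k)G(\overline{z}_{k-1})+\gamma_k G(z)+\gamma_k\langle\nabla G(\underline{z}_k),\tilde{z}_k-z\rangle+\tfrac{L\gamma_k^2}{2}\|\tilde{z}_k-z_{k-1}\|^2$; rearranging gives the required bound.

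The computation is elementary; the only points demanding care are tracking the convex-combination coefficients so the $G(\underline{z}_k)$ terms cancel, and verifying — the one place where the non-smoothness of $H$ could conceivably have intruded — that $H$ is never evaluated anywhere except at the fixed comparison point $z$. Since Lemma 2.1 of \cite{Lan_MPS} is established by exactly this reasoning and nowhere invokes smoothness or monotonicity of $H$ in this step, the statement transfers verbatim to our setting; the constants $M$ and $\delta$ of \eqref{delta_L_H} will surface only later, in the analysis of the inner loop.
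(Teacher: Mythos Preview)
Your proposal is correct and matches the paper's approach: the paper does not give its own proof of this lemma but simply invokes Lemma~2.1 of \cite{Lan_MPS}, observing that its argument never uses any smoothness or continuity property of $H$, so it carries over verbatim to the present non-smooth setting. Your write-up supplies exactly the standard convexity/$L$-smoothness argument underlying that lemma, and your observation that $H$ appears only through the fixed linear functional $\langle H(z),\cdot\rangle$ is precisely the reason the paper can quote the result without modification.
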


Nevertheless, Lipschitz condition is important for analysis of internal iterations of Algorithm \ref{alg:mps_deterministic}. We obtain the following generalization of result for these points.
\begin{lemma}
\label{lemma:int_determenistic}
If $M\leq \beta_k+\eta_k^t, \forall k, t\geq 1$ and $\eta_k^t \leq \beta_k + \eta_k^{t-1}, \forall k\geq 1, t\geq 2,$ then
\begin{align*}
    &\langle \nabla G(\underline{z}_k)+ H(z), \tilde{z}_k-z \rangle\\
    \leq & -\beta_k V(z_{k-1}, \tilde{z}_k) +\left(\beta_k+\frac{\eta_k^1}{T_k}\right)V(z_{k-1}, z)- \frac{\beta_k+\eta_k^{T_k}}{T_k} V(z_k, z) + \delta, \forall z\in Z.
\end{align*}
\end{lemma}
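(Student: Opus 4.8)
The plan is to analyze one inner loop (fixed $k$) by writing out the optimality conditions for the two prox-steps at each inner iteration $t$, summing over $t=1,\ldots,T_k$, and then dividing by $T_k$ to pass from the averaged iterate $\tilde z_k = \frac1{T_k}\sum_t \tilde z_k^t$ to the claimed bound. First I would recall the standard three-point identity for the Bregman divergence: for the point $w = \argmin_{z\in Z}\langle a, z\rangle + \beta V(z_{k-1},z) + \eta V(z_k^{t-1},z)$ one has, for all $z\in Z$,
\begin{equation*}
\langle a, w - z\rangle \le \beta\big(V(z_{k-1},z) - V(z_{k-1},w) - V(w,z)\big) + \eta\big(V(z_k^{t-1},z) - V(z_k^{t-1},w) - V(w,z)\big).
\end{equation*}
I would apply this once with $w = z_k^t$, $a = \nabla G(\underline z_k) + H(\tilde z_k^t)$ and $z$ arbitrary, and once with $w = \tilde z_k^t$, $a = \nabla G(\underline z_k) + H(z_k^{t-1})$ and $z = z_k^t$. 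Adding these and regrouping, the terms $\langle \nabla G(\underline z_k), \tilde z_k^t - z\rangle$ and $\langle H(\tilde z_k^t), \tilde z_k^t - z\rangle$ appear on the left, while on the right I get a telescoping-in-$t$ structure in $V(z_k^{t-1},z)$ versus $V(z_k^t,z)$ (weighted by $\beta_k$ and the $\eta_k^t$), together with the leftover cross term $\langle H(z_k^{t-1}) - H(\tilde z_k^t), z_k^t - \tilde z_k^t\rangle$ and some negative $V$-terms that we must use to absorb it.

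The crucial point — and I expect this to be the main obstacle — is controlling the cross term $\langle H(z_k^{t-1}) - H(\tilde z_k^t), z_k^t - \tilde z_k^t\rangle$ using only the weak assumption \eqref{delta_L_H} on $H$, i.e. that $H$ need not be Lipschitz but satisfies the relaxed inequality with parameters $M$ and $\delta$. Applying \eqref{delta_L_H} with $z_1 = \tilde z_k^t$, $z_2 = z_k^{t-1}$, $z_3 = z_k^t$ gives
\begin{equation*}
\langle H(z_k^{t-1}) - H(\tilde z_k^t), \tilde z_k^t - z_k^t\rangle \le \tfrac{M}{2}\|\tilde z_k^t - z_k^{t-1}\|^2 + \tfrac{M}{2}\|\tilde z_k^t - z_k^t\|^2 + \delta,
\end{equation*}
so the cross term is bounded by $-\tfrac{M}{2}\|\tilde z_k^t - z_k^{t-1}\|^2 - \tfrac{M}{2}\|\tilde z_k^t - z_k^t\|^2 - \delta$ after a sign flip — wait, more carefully, it contributes $+\tfrac{M}{2}(\ldots) + \delta$ to the upper bound. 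The negative Bregman terms produced by the prox-steps, namely $-\beta_k V(z_{k-1}, z_k^t)$, $-\eta_k^t V(z_k^{t-1}, z_k^t)$ and the two copies of $-V(\cdot,\cdot)$ from the $\tilde z_k^t$ step, must dominate these $\tfrac{M}{2}\|\cdot\|^2$ terms; using strong convexity of $V$ (so $V(u,w)\ge \tfrac12\|u-w\|^2$) this is exactly where the hypotheses $M \le \beta_k + \eta_k^t$ and $\eta_k^t \le \beta_k + \eta_k^{t-1}$ enter — the first closes the quadratic-term budget at step $t$, and the second is what lets the $V(z_k^{t-1},\cdot)$ coefficients telescale across consecutive inner iterations.

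After the cross terms are absorbed, summing the per-$t$ inequalities over $t = 1,\ldots,T_k$ telescopes the $V(z_k^{t-1},z) \to V(z_k^t,z)$ chain, leaving $V(z_k^0, z) = V(z_{k-1},z)$ at one end (with coefficient involving $\beta_k$ and $\eta_k^1$, using $z_k^0 = z_{k-1}$) and $-V(z_k^{T_k},z) = -V(z_k,z)$ at the other (with coefficient $\beta_k + \eta_k^{T_k}$ over... but we have not yet divided). I would also keep the $-\beta_k \sum_t V(z_{k-1},\tilde z_k^t)$ terms, and at the end invoke convexity of $V(z_{k-1},\cdot)$ to get $\sum_t V(z_{k-1},\tilde z_k^t) \ge T_k\, V(z_{k-1},\tilde z_k)$, and linearity of the inner product to get $\sum_t \langle \nabla G(\underline z_k) + H(z), \tilde z_k^t - z\rangle = T_k\langle \nabla G(\underline z_k) + H(z), \tilde z_k - z\rangle$ — here I would use \eqref{delta_L_H} once more (or monotonicity-type manipulations) to replace $H(\tilde z_k^t)$ appearing in the left-hand inner products by $H(z)$, at the cost of another $\delta$ per step. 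Dividing through by $T_k$ then yields precisely the stated bound, with the $\delta$ on the right absorbing the $\frac1{T_k}\sum_t \delta = \delta$ accumulation — the key qualitative conclusion being that the error $\delta$ does \emph{not} grow with $T_k$. I would double-check the bookkeeping of which $V$-terms carry $\beta_k$ versus $\eta_k^t$ coefficients, since that is where an off-by-one in the telescoping would break the argument.
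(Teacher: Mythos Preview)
Your approach is essentially the same as the paper's: derive the per-$t$ inequality from the optimality conditions of the two prox steps (the paper simply quotes this inequality from \cite{Lan_MPS}), bound the cross term $\langle H(z_k^{t-1}) - H(\tilde z_k^t), \tilde z_k^t - z_k^t\rangle$ via \eqref{delta_L_H}, absorb the resulting $\tfrac{M}{2}\|\cdot\|^2$ terms into the negative Bregman terms using $M \le \beta_k + \eta_k^t$, then average over $t$, use convexity of $V(z_{k-1},\cdot)$, and telescope with $\eta_k^t \le \beta_k + \eta_k^{t-1}$.

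One point needs correction. In the step where you pass from $\langle H(\tilde z_k^t), \tilde z_k^t - z\rangle$ to $\langle H(z), \tilde z_k^t - z\rangle$, you cannot use \eqref{delta_L_H}: that condition gives an \emph{upper} bound on $\langle H(z_1) - H(z_2), z_1 - z_3\rangle$, whereas here you need the \emph{lower} bound $\langle H(\tilde z_k^t) - H(z), \tilde z_k^t - z\rangle \ge 0$. This is precisely monotonicity of $H$, which the paper invokes explicitly (and which, in the saddle-point setting, follows from convexity--concavity of $F$). So your ``monotonicity-type manipulations'' alternative is the only one that works, and there is no ``another $\delta$ per step'': the sole $\delta$ comes from the single use of \eqref{delta_L_H} on the cross term, and after averaging over $T_k$ you recover exactly the $+\delta$ in the stated bound.
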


\begin{proof}
    In the work \cite{Lan_MPS} authors proved the following inequality:
    \begin{align*}
        &\langle G(\underline{z}_k), \tilde{z}_k^t - z \rangle + \langle H(\tilde{z}_k^t), z_k^t - z\rangle + \langle H({z}_k^{t-1}), \tilde{z}_k^t - z\rangle\\
        \leq & \beta_k\left(V(z_{k-1}, z) - V(z_k^t, z) - V(z_{k-1}, \tilde{z}_k^t)-V(\tilde{z}_k^t, z^t_k)\right)\\
        & + \eta_k^t \left(V(z_{k}^{t-1}, z) - V(z_k^t, z) - V(z_{k}^{t-1}, \tilde{z}_k^t)-V(\tilde{z}_k^t, z^t_k)\right), \forall z \in Z.
    \end{align*}
Using the condition \eqref{delta_L_H} for $H$ and its monotonicity, we obtain the following estimate:
\begin{align*}
    &\langle H(\tilde{z}_k^t), z_k^t - z\rangle + \langle H({z}_k^{t-1}), \tilde{z}_k^t - z\rangle\\
    =&\langle  H({z}_k^{t-1})-H(\tilde{z}_k^t), \tilde{z}_k^t- z_k^t\rangle +\langle H( \tilde{z}_k^t),  \tilde{z}_k^t-z\rangle\\
        \geq& -\frac{M}{2}\|z_k^{t-1}- \tilde{z}_k^t\| - \frac{M}{2}\|\tilde{z}_k^t-z_k^t\| - \delta + \langle H(z), \tilde{z}_k^t-z\rangle\\
    \geq& -MV(z_k^{t-1}, \tilde{z}_k^t) - MV(\tilde{z}_k^t, z_k^t) - \delta + \langle H(z), \tilde{z}_k^t-z\rangle, \forall z\in Z.
\end{align*}
Uniting the above two inequalities we have for all $z\in Z$
    \begin{align*}
        &\langle G(\underline{z}_k)+H(z), \tilde{z}_k^t - z \rangle\\
        \leq & \beta_k\left(V(z_{k-1}, z) - V(z_k^t, z) - V(z_{k-1}, \tilde{z}_k^t)-V(\tilde{z}_k^t, z^t_k)\right)\\
        & + \eta_k^t \left(V(z_{k}^{t-1}, z) - V(z_k^t, z) - V(z_{k}^{t-1}, \tilde{z}_k^t)-V(\tilde{z}_k^t, z^t_k)\right)\\
        &+M V(z_k^{t-1}, \tilde{z}_k^t) + M V(\tilde{z}_k^t, z_k^t) + \delta \\
        \leq & \beta_k V(z_{k-1}, z) -\beta_k V(z_{k-1}, \tilde{z}_k^t) + \eta_k^t V(z_{k}^{t-1}, z) - (\beta_k+\eta_k^t) V(z_k^t, z) + \delta.
    \end{align*}
Averaging relations above for $t=1,\dots, T_k$ and using convexity of $V(z_{k-1}, \cdot)$ and the definition $\tilde{z}_k = \frac{1}{T_k}\sum\limits_{t=1}^{T_k} \tilde{z}_k^t$, we obtain the following estimate:
    \begin{align*}
        &\langle G(\underline{z}_k)+H(z), \tilde{z}_k^t - z \rangle\\
        \leq & \beta_k\left(V(z_{k-1}, z) - V(z_k^t, z) - V(z_{k-1}, \tilde{z}_k^t)-V(\tilde{z}_k^t, z^t_k)\right)\\
        & + \eta_k^t \left(V(z_{k}^{t-1}, z) - V(z_k^t, z) - V(z_{k}^{t-1}, \tilde{z}_k^t)-V(\tilde{z}_k^t, z^t_k)\right)\\
        &+M V(z_k^{t-1}, \tilde{z}_k^t) + M V(\tilde{z}_k^t, z_k^t) + \delta \\
        \leq & \beta_k V(z_{k-1}, z) -\beta_k V(z_{k-1}, \tilde{z}_k) \\
        &+\frac{\eta_k^1}{T_k}V(z_{k}^{0}, z)+ \sum\limits_{t=2}^{T_k} (\eta_k^t-\beta_k -\eta_k^{t-1}) V(z_{k}^{t-1}, z) - \frac{\beta_k+\eta_k^{T_k}}{T_k} V(z_k^{T_k}, z) + \delta.
    \end{align*}
Using the relation between $\beta_k$ and $\eta_k^t$ and recalling that $z_k^0 = z_{k-1}$ and $z_k^{T_k}=z_k$ we obtain the result of this lemma immediately.
\end{proof}

The significant difference with Lemma 2.2 from the work \cite{Lan_MPS} is the last term $\delta$. Further, we will consider its influence on the convergence of mirror-prox sliding. Now, we can obtain the convergence of method \ref{alg:mps_deterministic} using the following quantities:
$$\Gamma_k = \begin{cases}1&k=1,\\(1-\gamma_k)\Gamma_{k-1}&k>1.\end{cases}$$

\begin{lemma}
\label{lemma:mps_deterministic}
    Let the conditions of Lemma \ref{lemma:int_determenistic}  and the following conditions hold:
    $$\gamma_1=1, \gamma_k\in [0,1], \beta_k\geq L\gamma_k, \forall k\geq 1,$$
    $$\frac{\gamma_k}{\Gamma_k}\left(\beta_k +\frac{\eta_k^1}{T_k}\right)\leq \frac{\gamma_{k-1}(\beta_{k-1}+\eta_{k-1}^{T_k-1})}{\Gamma_{k-1}T_{k-1}},\forall k\geq 2.$$
    We have
    $$Q(\overline{z}_N, z)\leq \Gamma_N\left(\beta_1+\frac{\eta^1_1}{T_1}\right)V(z_0,z)-\frac{\gamma_N(\beta_N+\eta^{T_N}{N})}{T_N}V(z_N,z)+\delta \Gamma_N \sum\limits_{k=1}^N \frac{\gamma_k}{\Gamma_k},\forall z\in Z.$$
\end{lemma}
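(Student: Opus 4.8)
The plan is to combine Lemma~\ref{lemma:ext_determenistic} and Lemma~\ref{lemma:int_determenistic} and then telescope over the outer index $k$ using the weights $\Gamma_k$. First I would substitute the bound from Lemma~\ref{lemma:int_determenistic} into the right-hand side of Lemma~\ref{lemma:ext_determenistic}. This replaces the term $\langle \nabla G(\underline{z}_k)+H(z),\tilde{z}_k-z\rangle$ by the quantity
$$-\beta_k V(z_{k-1},\tilde{z}_k)+\left(\beta_k+\tfrac{\eta_k^1}{T_k}\right)V(z_{k-1},z)-\tfrac{\beta_k+\eta_k^{T_k}}{T_k}V(z_k,z)+\delta,$$
so that the per-iteration inequality reads, writing $Q_k:=G(\overline{z}_k)-G(z)+\langle H(z),\overline{z}_k-z\rangle$,
$$Q_k-(1-\gamma_k)Q_{k-1}\le \gamma_k\left[-\beta_k V(z_{k-1},\tilde{z}_k)+\tfrac{L\gamma_k}{2}\|\tilde{z}_k-z_{k-1}\|^2+\left(\beta_k+\tfrac{\eta_k^1}{T_k}\right)V(z_{k-1},z)-\tfrac{\beta_k+\eta_k^{T_k}}{T_k}V(z_k,z)+\delta\right].$$
The condition $\beta_k\ge L\gamma_k$ together with strong convexity of the prox term (so $V(z_{k-1},\tilde z_k)\ge \tfrac12\|\tilde z_k-z_{k-1}\|^2$) makes the combination $-\gamma_k\beta_k V(z_{k-1},\tilde z_k)+\tfrac{L\gamma_k^2}{2}\|\tilde z_k-z_{k-1}\|^2$ nonpositive, so those two terms can be dropped.

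Next I would divide through by $\Gamma_k$ and sum over $k=1,\dots,N$. Since $\Gamma_k=(1-\gamma_k)\Gamma_{k-1}$ for $k>1$ and $\gamma_1=1$ (so the $(1-\gamma_1)Q_0$ term vanishes), the left-hand side telescopes to $Q_N/\Gamma_N$. On the right-hand side, the terms $\tfrac{\gamma_k}{\Gamma_k}\big(\beta_k+\tfrac{\eta_k^1}{T_k}\big)V(z_{k-1},z)$ and $-\tfrac{\gamma_k}{\Gamma_k}\cdot\tfrac{\beta_k+\eta_k^{T_k}}{T_k}V(z_k,z)$ pair up across consecutive $k$; the hypothesis
$$\frac{\gamma_k}{\Gamma_k}\left(\beta_k+\frac{\eta_k^1}{T_k}\right)\le \frac{\gamma_{k-1}(\beta_{k-1}+\eta_{k-1}^{T_{k-1}})}{\Gamma_{k-1}T_{k-1}}$$
guarantees that each such difference $\Big[\tfrac{\gamma_k}{\Gamma_k}\big(\beta_k+\tfrac{\eta_k^1}{T_k}\big)-\tfrac{\gamma_{k-1}}{\Gamma_{k-1}}\tfrac{\beta_{k-1}+\eta_{k-1}^{T_{k-1}}}{T_{k-1}}\Big]V(z_{k-1},z)$ is nonpositive, so the whole interior of the $V(\cdot,z)$ sum can be discarded, leaving only the $k=1$ boundary term $\big(\beta_1+\tfrac{\eta_1^1}{T_1}\big)V(z_0,z)$ (using $\Gamma_1=\gamma_1=1$) and the final negative term $-\tfrac{\gamma_N}{\Gamma_N}\cdot\tfrac{\beta_N+\eta_N^{T_N}}{T_N}V(z_N,z)$. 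The error terms contribute $\sum_{k=1}^N \tfrac{\gamma_k}{\Gamma_k}\delta$. Multiplying back through by $\Gamma_N$ and recalling $Q(\overline z_N,z)=Q_N$ yields exactly the claimed inequality.

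The main obstacle is bookkeeping rather than a deep idea: one must check carefully that after dividing by $\Gamma_k$ the coefficients of the $V(z_{k-1},z)$ terms from step $k$ and the $V(z_k,z)$ terms from step $k-1$ are arranged so the stated monotonicity hypothesis makes the telescoped sum collapse with the correct sign, and that the $k=1$ special case (where $\gamma_1=1$, $\Gamma_1=1$, and the $(1-\gamma_1)$ factor kills the initial $Q_0$) lines up with the boundary term in the conclusion. A secondary point to get right is that the conditions of Lemma~\ref{lemma:int_determenistic} (namely $M\le\beta_k+\eta_k^t$ and $\eta_k^t\le\beta_k+\eta_k^{t-1}$) are assumed as hypotheses here, so Lemma~\ref{lemma:int_determenistic} applies verbatim and the non-smoothness of $H$ only enters through the additive $\delta$, which is already tracked. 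I would also note in passing that the appearance of $\eta_{k-1}^{T_k-1}$ versus $\eta_{k-1}^{T_{k-1}}$ in the hypothesis looks like a typo for $\eta_{k-1}^{T_{k-1}}$, which is the index that actually arises in the telescoping.
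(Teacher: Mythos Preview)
Your proposal is correct and follows exactly the approach in the paper: combine Lemmas~\ref{lemma:ext_determenistic} and~\ref{lemma:int_determenistic}, use $\beta_k\ge L\gamma_k$ together with strong convexity of $V$ to discard the $-\beta_k V(z_{k-1},\tilde z_k)+\tfrac{L\gamma_k}{2}\|\tilde z_k-z_{k-1}\|^2$ combination, then divide by $\Gamma_k$, sum over $k$, and telescope using the parameter hypothesis. Your write-up is in fact more detailed than the paper's (which compresses the telescoping into a single sentence), and your observation that $\eta_{k-1}^{T_k-1}$ should read $\eta_{k-1}^{T_{k-1}}$ is also correct.
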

\begin{proof}
Combining the results of Lemmas \ref{lemma:ext_determenistic} and \ref{lemma:int_determenistic} and recalling the strong-convexity of $V(\cdot, \cdot)$ we obtain the following inequality:
    \begin{align*}
    &\left[G(\overline{z}_k)-G(z)+\langle H(z), \overline{z}_k-z\rangle\right]\\
    & -(1-\gamma_k)\left[G(\overline{z}_{k-1})-G(z)+\langle H(z), \overline{z}_{k-1}-z\rangle\right]\\
    \leq & \gamma_k\left[\left(\beta_k+\frac{\eta_k^1}{T_k}\right)V(z_{k-1}, z)- \frac{\beta_k+\eta_k^{T_k}}{T_k} V(z_k, z) + \delta\right], \forall z\in Z.
\end{align*}
Dividing the above relation by $\Gamma_k$, summing from $k = 1 \dots, N$, and recalling our assumption
of parameters, we conclude
$$Q(\overline{z}_N, z)\leq \Gamma_N\left(\beta_1+\frac{\eta^1_1}{T_1}\right)V(z_0,z)-\frac{\gamma_N(\beta_N+\eta^{T_N}{N})}{T_N}V(z_N,z)+\delta \Gamma_N \sum\limits_{k=1}^N \frac{\gamma_k}{\Gamma_k}.$$
\end{proof}

We see here that the error accumulates as $\delta \Gamma_N \sum\limits_{k=1}^N \frac{\gamma_k}{\Gamma_k}.$ Further, we demonstrate that the parameters proposed in \cite{Lan_MPS} allows value $\Gamma_N \sum\limits_{k=1}^N \frac{\gamma_k}{\Gamma_k}$ to be constant. Moreover, we obtain the following result:
\begin{theorem}
\label{theorem_mps_deterministic}
    Suppose that parameters in the outer iterations of Algorithm \ref{alg:mps_deterministic} are set to:
    $$\gamma_k=\frac{2}{k+1},\beta_k=\frac{2L}{k},T_k=\left\lceil\frac{kM}{L}\right\rceil, \eta_k^t=\beta_k(t-1)+\frac{LT_k}{k}.$$
    In order to compute an approximate solution $\overline{z}_N$ such that $\sup_{z\in Z}Q(\overline{z}_N,z)\leq \varepsilon+\delta$, the number of evaluations of
gradients $\nabla G(\cdot)$ and operators $H(\cdot)$ are bounded by
$$N_{\nabla G}=O\left(\sqrt{\frac{L\Omega_{z_0}^2}{\varepsilon}}\right),\quad N_H=O\left(\frac{M\Omega_{z_0}^2}{\varepsilon}+\sqrt{\frac{L\Omega_{z_0}^2}{\varepsilon}}\right),$$
respectively, where $\Omega_{z_0}^2=\sup_{z\in Z} V(z_0, z).$
\end{theorem}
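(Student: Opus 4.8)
The plan is to instantiate Lemma~\ref{lemma:mps_deterministic} with the stated schedule, simplify the resulting constants explicitly, and then read off the two complexity bounds by counting how often $\nabla G$ and $H$ are invoked.

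First I would check that the proposed parameters meet every hypothesis used in Lemmas~\ref{lemma:int_determenistic} and~\ref{lemma:mps_deterministic}. With $\gamma_k = 2/(k+1)$ the recursion $\Gamma_k = (1-\gamma_k)\Gamma_{k-1}$ telescopes to $\Gamma_k = 2/(k(k+1))$, so $\gamma_k/\Gamma_k = k$. The choice $\eta_k^t = \beta_k(t-1) + LT_k/k$ gives $\eta_k^1 = LT_k/k$, $\eta_k^t - \eta_k^{t-1} = \beta_k$, and, using $\beta_k = 2L/k$, one gets $\beta_k + \eta_k^{T_k} = T_k(\beta_k + L/k) = 3LT_k/k$. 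From these identities: $\gamma_1 = 1$ and $\gamma_k \in [0,1]$; $\beta_k = 2L/k \geq 2L/(k+1) = L\gamma_k$; $\eta_k^t = \beta_k + \eta_k^{t-1}$, so the condition $\eta_k^t \leq \beta_k + \eta_k^{t-1}$ holds with equality; $M \leq \beta_k + \eta_k^1$ because $\eta_k^1 = LT_k/k \geq M$ thanks to $T_k = \lceil kM/L\rceil \geq kM/L$; and the ratio condition $\frac{\gamma_k}{\Gamma_k}(\beta_k + \frac{\eta_k^1}{T_k}) \leq \frac{\gamma_{k-1}(\beta_{k-1}+\eta_{k-1}^{T_{k-1}})}{\Gamma_{k-1}T_{k-1}}$ holds with equality, both sides being identically $3L$ (here I read the right-hand side with the index $\eta_{k-1}^{T_{k-1}}$).

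Next I would substitute the schedule into the conclusion of Lemma~\ref{lemma:mps_deterministic}. The term $-\frac{\gamma_N(\beta_N+\eta_N^{T_N})}{T_N}V(z_N,z)$ is non-positive and may be dropped. Using $\beta_1 + \eta_1^1/T_1 = 3L$, $\Gamma_N = 2/(N(N+1))$, and the identity $\Gamma_N\sum_{k=1}^N \gamma_k/\Gamma_k = \frac{2}{N(N+1)}\sum_{k=1}^N k = 1$, and then taking the supremum over $z \in Z$ with $\sup_{z} V(z_0,z) = \Omega_{z_0}^2$, one obtains
$$\sup_{z\in Z} Q(\overline{z}_N, z) \leq \frac{6L\,\Omega_{z_0}^2}{N(N+1)} + \delta.$$
The point worth stressing is that the inexactness term stays exactly $\delta$ and does not blow up with $N$, precisely because $\Gamma_N\sum_k \gamma_k/\Gamma_k = 1$ for every $N$. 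Hence $\sup_{z} Q(\overline{z}_N, z) \leq \varepsilon + \delta$ as soon as $N(N+1) \geq 6L\Omega_{z_0}^2/\varepsilon$, i.e. it suffices to run $N = O(\sqrt{L\Omega_{z_0}^2/\varepsilon})$ outer iterations.

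Finally I would count oracle calls. Since $\nabla G$ is evaluated once per outer iteration (only at $\underline{z}_k$), $N_{\nabla G} = O(N) = O(\sqrt{L\Omega_{z_0}^2/\varepsilon})$. Since $H$ is evaluated twice in each of the $T_k$ inner steps, $N_H = \sum_{k=1}^N 2T_k \leq \sum_{k=1}^N 2\left(\frac{kM}{L} + 1\right) = \frac{M}{L}N(N+1) + 2N$; plugging in $N(N+1) = O(L\Omega_{z_0}^2/\varepsilon)$ and $N = O(\sqrt{L\Omega_{z_0}^2/\varepsilon})$ gives $N_H = O\!\left(\frac{M\Omega_{z_0}^2}{\varepsilon} + \sqrt{\frac{L\Omega_{z_0}^2}{\varepsilon}}\right)$. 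I do not expect a genuine obstacle: the non-smoothness of $H$ was already absorbed into the additive $\delta$ by Lemma~\ref{lemma:mps_deterministic}, so the only things requiring care are verifying the (equality-tight) parameter conditions above and tracking the $\delta$-term to confirm it does not accumulate.
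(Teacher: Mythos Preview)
Your proposal is correct and follows essentially the same route as the paper's proof: apply Lemma~\ref{lemma:mps_deterministic} with the stated schedule, observe $\Gamma_N = \tfrac{2}{N(N+1)}$ and that the $\delta$-term carries the factor $\Gamma_N\sum_k \gamma_k/\Gamma_k = 1$, then read off the complexities. The paper simply cites \cite{Lan_MPS} for the parameter verification and states the bound $\sup_z Q(\overline{z}_N,z)\leq 6L\Omega_{z_0}^2/N^2 + \delta$, whereas you carry out the verification explicitly (and obtain the marginally sharper $6L\Omega_{z_0}^2/(N(N+1))$).
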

\begin{proof}
    In the work \cite{Lan_MPS} these parameters was proven to satisfy conditions of Lemma \ref{lemma:mps_deterministic}.     For given parameters we have that the first two terms in estimation from Lemma \ref{lemma:mps_deterministic}
 can be estimated in the following way:
$$\Gamma_N\left(\beta_1+\frac{\eta^1_1}{T_1}\right)V(z_0,z)-\frac{\gamma_N(\beta_N+\eta^{T_N}{N})}{T_N}V(z_N,z)\leq \frac{6L\Omega_{z_0}^2}{N^2}$$
    Also, note that $\Gamma_k= \frac{2}{k(k+1)}.$ Therefore, $\sum\limits_{k=1}^N \frac{\gamma_k}{\Gamma_k} = \sum\limits_{k=1}^N k = \frac{N(N+1)}{2}=\frac{1}{\Gamma_N}$ and
    $$\delta \Gamma_N \sum\limits_{k=1}^N \frac{\gamma_k}{\Gamma_k}=\delta, \forall N\geq 1.$$
    So, we obtain the following estimation for $\sup_{z\in Z}Q(\overline{z}_N,z)$:
    $$\sup_{z\in Z}Q(\overline{z}_N,z)\leq \frac{6L\Omega_{z_0}^2}{N^2}+\delta.$$
    From this we obtain the results of Theorem \ref{theorem_mps_deterministic}.
\end{proof}

So, we can see that mirror-prox sliding form work \cite{Lan_MPS} can be applied for non-lipschitz operator. At the same time, the accuracy of solution can not be better than $\delta$.

\vspace{0.5cm}
\noindent\textbf{{Stochastic case}}. 
\begin{algorithm}[H]
	\caption{The stochastic mirror-prox sliding (SMPS) method for solving VI \eqref{vi_problem}.} \label{alg:mps_stochastic}
	\begin{algorithmic}[1]
		\REQUIRE Initial point $z_0 \in Z$, external steps number $N$,  sequence on internal steps number $\{T^k\}_{k=1}^n$, sequences of parameters $\{\beta_k\}_{k=1}^n, \{\eta_k^t\}_{t\leq T_k, k\leq N}, \{\gamma_k\}_{k=1}^N$
		
		\STATE Modify step 5 of Algorithm \ref{alg:mps_deterministic} in the following way:
		$$\tilde{z}_k^t = \argmin\limits_{z\in Z} \langle \nabla G(\underline{z}_k)+\mathcal{H}(z_k^{t-1};\zeta_k^{2t-1}), z\rangle + \beta_k V(z_{k-1}, z) + \eta_k^t V(z_k^{t-1}, z),$$        
		$${z}_k^t = \argmin\limits_{z\in Z} \langle \nabla G(\underline{z}_k)+\mathcal{H}(\tilde{z}_k^t; \zeta_k^{2t}), z\rangle + \beta_k V(z_{k-1}, z) + \eta_k^t V(z_k^{t-1}, z).$$
	\end{algorithmic}
\end{algorithm}
Further, let us consider stochastic case. For this we use modification of MPS method from \cite{Lan_MPS} (see Algorithm \ref{alg:mps_stochastic}). This algorithm use operator $\mathcal{H}\left(z; \zeta\right)$. It depends on the deterministic variable $z$ and random variable $\zeta.$ Operator $\mathcal{H}$ is random estimation of the operator $H$ from the problem \eqref{vi_problem}. We prove that the stochastic version of Algorithm \ref{alg:mps_deterministic} works for non-lipschitz operator too.

\begin{assumption}
\label{assump:h}
    Suppose that $\zeta_k^s$'s are independently random samples and the stochastic operator $\mathcal{H}$ satisfies unbiasedness $\mathbb{E}_{\zeta^{k}_{2t-1}} \left[\mathcal{H}\left(z^{k}_{t-1}; \zeta^{k}_{2t-1}\right)\right]=H(z^k_{t-1})$ and $\mathbb{E}_{\zeta^{k}_{2t}} \left[\mathcal{H}\left(\tilde{z}_k^t; \zeta^{k}_{2t}\right)\right]=H(\tilde{z}_k^t)$ and bounded variance $\mathbb{E}_{\zeta^{k}_{2t-1}} \left\|\mathcal{H}\left(z^{k}_{t-1}; \zeta^{k}_{2t-1}\right)-H(z^k_{t-1})\right\|^2\leq \sigma^2$ and $\mathbb{E}_{\zeta^{k}_{2t}} \left\|\mathcal{H}\left(\tilde{z}_k^t; \zeta^{k}_{2t}\right)-H(\tilde{z}_k^t)\right\|^2\leq \sigma^2$ for all $t=\overline{1, T_k}, k=\overline{1, N}$.
\end{assumption}

Further, to analyze Algorithm \ref{alg:mps_stochastic} we introduce the following estimates:
\begin{align}
    \label{delta_def}
    \Delta_k^{2t-1}&:=\mathcal{H}(z_k^{t-1}; \zeta_k^{2t-1})-H(z_k^{t-1})~~\text{ and }~~\Delta_k^{2t}:=\mathcal{H}(\tilde{z}_k^{t}; \zeta_k^{2t})-H(\tilde{z}_k^{t-1}), \\
    w_k^t &= \argmin_{z\in Z} \left( -\langle \Delta_k^{2t}, z\rangle +\beta_k V(w_{k-1}, z) + \eta_k^t V(w_k^{t-1}, z) \right), \\
    \label{w_tilde_def}
    \tilde{w}_k^t &= \frac{\eta_k^t}{\beta_k+\eta_k^t} w_k^{t-1} + \frac{\beta_k}{\beta_k+\eta_k^t} w_{k-1}.
\end{align}

Note, that external iterations are still described by Lemma \ref{lemma:ext_determenistic}. Using the above notation, we can obtain the following lemma for internal iterations of Algorithm \ref{alg:mps_stochastic}.

\begin{lemma}
    \label{stoch_lemma}
    If the parameters of Algorithm \ref{alg:mps_stochastic} satisfy $M\leq \beta_k + \eta_k^t, \forall k\geq 1, t\geq 1$ and $\eta_k^t\leq \beta_k + \eta_k^{t-1}, \forall k\geq 1, t\geq 2$ then
    \begin{align*}
        &\langle G(\underline{z}_k) + H(z), \tilde{z}_k-z\rangle\\
        \leq & -\beta_k V(z_{k-1}, \tilde{z}_k) + \left(\beta_k + \frac{\eta_k^1}{T_k}\right)\left(V(z_{k-1}, z) + V(w_{k-1}, z)\right) \\
        &- \frac{\beta_k + \eta_k^{T_k}}{T_k}\left(V(z_k, z) + V(w_k, z)\right) + \frac{1}{T_k} \sum\limits_{t=1}^{T_k} \frac{1}{2M}\left(2\|\Delta_k^{2t-1}\|^2 + 3\|\Delta_k^{2t}\|^2 \right)\\ 
        &- \frac{1}{T_k} \sum\limits_{t=1}^{T_k}\langle \Delta_k^{2t}, \tilde{z}_k^t - \tilde{w}_k^t\rangle+\delta, \forall z\in Z,
    \end{align*}
where $\tilde{w}_k^t$ is defined in \eqref{w_tilde_def}.
\end{lemma}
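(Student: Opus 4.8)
The plan is to mimic the proof of Lemma \ref{lemma:int_determenistic}, but carefully track the stochastic error terms $\Delta_k^{2t-1}$ and $\Delta_k^{2t}$ that arise when replacing $H(z_k^{t-1})$ and $H(\tilde z_k^t)$ with their stochastic estimates $\mathcal H(z_k^{t-1};\zeta_k^{2t-1})$ and $\mathcal H(\tilde z_k^t;\zeta_k^{2t})$. First I would write the three-point inequality of \cite{Lan_MPS} for the two prox-steps of Algorithm \ref{alg:mps_stochastic}, which now reads with $\mathcal H$ in place of $H$:
\begin{align*}
    &\langle \nabla G(\underline z_k), \tilde z_k^t - z\rangle + \langle \mathcal H(\tilde z_k^t;\zeta_k^{2t}), z_k^t - z\rangle + \langle \mathcal H(z_k^{t-1};\zeta_k^{2t-1}), \tilde z_k^t - z_k^t\rangle\\
    \leq & \beta_k\left(V(z_{k-1},z)-V(z_k^t,z)-V(z_{k-1},\tilde z_k^t)-V(\tilde z_k^t,z_k^t)\right)\\
    & + \eta_k^t\left(V(z_k^{t-1},z)-V(z_k^t,z)-V(z_k^{t-1},\tilde z_k^t)-V(\tilde z_k^t,z_k^t)\right).
\end{align*}
Then I substitute $\mathcal H = H + \Delta$ on the left. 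The terms involving the true operator $H$ get handled exactly as in the deterministic proof: using $\langle H(z_k^{t-1}) - H(\tilde z_k^t), \tilde z_k^t - z_k^t\rangle \geq -M V(z_k^{t-1},\tilde z_k^t) - M V(\tilde z_k^t, z_k^t) - \delta$ from \eqref{delta_L_H} together with monotonicity $\langle H(\tilde z_k^t) - H(z), \tilde z_k^t - z\rangle \geq 0$, the $M$-terms cancel against the $-\beta_k$ and $-\eta_k^t$ Bregman terms under the hypotheses $M \leq \beta_k + \eta_k^t$.

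The genuinely new part is the error terms $-\langle \Delta_k^{2t}, z_k^t - z\rangle - \langle \Delta_k^{2t-1}, \tilde z_k^t - z_k^t\rangle$. I would split $-\langle \Delta_k^{2t}, z_k^t - z\rangle = -\langle \Delta_k^{2t}, z_k^t - \tilde w_k^t\rangle - \langle \Delta_k^{2t}, \tilde w_k^t - z\rangle$, and likewise isolate $-\langle \Delta_k^{2t-1}, \tilde z_k^t - z_k^t\rangle$. The standard device (as in stochastic mirror-prox analyses, e.g.\ \cite{Lan_MPS}) is to introduce the auxiliary sequence $w_k^t$ defined by the prox-step with $-\Delta_k^{2t}$ and its convex combination $\tilde w_k^t$ from \eqref{w_tilde_def}; comparing $w_k^t$ against $z$ yields a telescoping chain in $V(w_{k-1},z), V(w_k^t,z)$ that absorbs $-\langle \Delta_k^{2t}, \tilde w_k^t - z\rangle$, while the leftover inner products against the ``displacement'' directions $z_k^t - \tilde w_k^t$ and $\tilde z_k^t - z_k^t$ are bounded via Young's inequality $\langle a, b\rangle \leq \tfrac{1}{2M}\|a\|^2 + \tfrac{M}{2}\|b\|^2$, the squared-norm parts going into the $\tfrac{1}{2M}(2\|\Delta_k^{2t-1}\|^2 + 3\|\Delta_k^{2t}\|^2)$ term and the $\tfrac{M}{2}\|\cdot\|^2$ parts being dominated by remaining Bregman terms (using $V(u,v)\geq \tfrac12\|u-v\|^2$). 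The residual $-\langle \Delta_k^{2t}, \tilde z_k^t - \tilde w_k^t\rangle$ is kept explicitly, as it will be handled by taking expectations later (since $\tilde z_k^t - \tilde w_k^t$ is measurable with respect to the past while $\Delta_k^{2t}$ is conditionally mean-zero).

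Finally I would average the per-$t$ inequalities over $t = 1,\dots,T_k$, use convexity of $V(z_{k-1},\cdot)$ and $V(w_{k-1},\cdot)$ together with $\tilde z_k = \tfrac{1}{T_k}\sum_t \tilde z_k^t$, and telescope the chains in $t$ using $\eta_k^t \leq \beta_k + \eta_k^{t-1}$, $z_k^0 = z_{k-1}$, $z_k^{T_k} = z_k$, $w_k^0 = w_{k-1}$, $w_k^{T_k} = w_k$, exactly as at the end of the proof of Lemma \ref{lemma:int_determenistic}. This produces the coefficients $\beta_k + \tfrac{\eta_k^1}{T_k}$ on the initial Bregman terms and $\tfrac{\beta_k+\eta_k^{T_k}}{T_k}$ on the terminal ones, now for both the $z$- and $w$-sequences, giving the stated bound. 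The main obstacle I anticipate is the bookkeeping in choosing the Young's-inequality splits so that the spurious $\tfrac{M}{2}\|z_k^t - \tilde w_k^t\|^2$-type terms are genuinely absorbed by the available $-\beta_k V$ and $-\eta_k^t V$ slack (this is where the precise constants $2$ and $3$ in front of $\|\Delta_k^{2t-1}\|^2$ and $\|\Delta_k^{2t}\|^2$ come from), and making sure the definition of $\tilde w_k^t$ as the $(\beta_k,\eta_k^t)$-weighted average of $w_k^{t-1}$ and $w_{k-1}$ is exactly what makes the $w$-prox-step inequality line up with the structure of the main inequality.
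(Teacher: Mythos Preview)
Your proposal is correct and follows essentially the same route as the paper: cite the prox-inequalities from \cite{Lan_MPS} for both the $z$- and $w$-sequences, decompose $\mathcal H=H+\Delta$, handle the $H$-part via \eqref{delta_L_H} and monotonicity, absorb the displacement errors with Young's inequality at level $M$, keep the martingale residual $-\langle\Delta_k^{2t},\tilde z_k^t-\tilde w_k^t\rangle$, and average/telescope. One small bookkeeping slip to fix: the $\Delta_k^{2t}$-term that couples to $z$ should be organized as $\langle\Delta_k^{2t},\tilde z_k^t-z\rangle$ (not $\langle\Delta_k^{2t},z_k^t-z\rangle$), obtained by first writing the stochastic left-hand side as $\langle\mathcal H(z_k^{t-1})-\mathcal H(\tilde z_k^t),\tilde z_k^t-z_k^t\rangle+\langle\mathcal H(\tilde z_k^t),\tilde z_k^t-z\rangle$; then the split $\tilde z_k^t-z=(\tilde z_k^t-\tilde w_k^t)+(\tilde w_k^t-w_k^t)+(w_k^t-z)$ makes the $w$-prox inequality, the $\frac{1}{2(\beta_k+\eta_k^t)}\|\Delta_k^{2t}\|^2$ bound, and the stated residual line up exactly, yielding the constants $2$ and $3$ without any leftover $\frac{M}{2}\|z_k^t-\tilde w_k^t\|^2$ term to absorb.
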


\begin{proof}

    In the proof of Lemma 3.1 in \cite{Lan_MPS} the authors proved three following inequalities. Firstly, it is the estimate:
    \begin{equation}
    \label{eq_3_1_1}
        \begin{aligned}
            &\langle G(\underline{z}_k), \tilde{z}_k^t - z \rangle + \langle\mathcal{H}(\tilde{z}_k^t;\zeta_k^{2t}), z_k^t- z\rangle \\ &+ \langle\mathcal{H}({z}_k^{t-1};\zeta_k^{2t-1}), \tilde{z}_k^t-z_k^t\rangle\\
            \leq & \beta_k V(z_{k-1}, {z})- \beta_k V(z_{k-1}, \tilde{z}_k^t) + \eta_k V(z_{k}^{t-1}, {z}) - (\beta_k+\eta_k^t) V(z_{k}^t, {z})\\
            &-(\beta_k+\eta_k^t)V(\tilde{z}_k^t, z_k^t) - \eta_k^t V(z_{k}^{t-1}, \tilde{z}_k^t), \forall z\in Z,
        \end{aligned}
    \end{equation}
    Secondly, it is the estimate for $\langle \Delta_k^{2t}, w_k^t - z \rangle$:
    \begin{equation}
    \label{eq_3_1_2}
        \begin{aligned}
            &-\langle \Delta_k^{2t}, w_k^t - z \rangle\\
            \leq & \beta_k\left(V(w_{k-1}, z) - V(w_{k-1}, w_k^t)-V(w_k^t, z)\right)\\
            &+ \eta_k\left(V(w_k^{t-1}, z)-V(w_k^{t-1},w_k^t)-V(w_k^t,z)\right), \forall z\in Z,
        \end{aligned}
    \end{equation}
    Finally, we have the following estimate for $\langle \Delta_k^{2t}, \tilde{w}_k^t-w_k^t \rangle$:
    \begin{equation}
    \label{eq_3_1_3}
        \begin{aligned}
            &-\langle \Delta_k^{2t}, \tilde{w}_k^t-w_k^t \rangle \\
            \leq&\frac{1}{2(\beta_k+\eta_k^t)}\|\Delta_k^{2t}\|^2 + \beta_k V(w_{k-1}, w_k^t) + \eta_k^t V(w_k^{t-1}, w_k^t), \forall z\in Z.
        \end{aligned}
    \end{equation}
    
Further, using definition of $\Delta_k^{2t}$ and $\Delta_k^{2t-1}$ $\eqref{delta_def}$ and condition \eqref{delta_L_H} and monotonicity for operator $H(\cdot)$ we obtain the following estimation:
\begin{align*}
     &\langle\mathcal{H}(\tilde{z}_k^t;\zeta_k^{2t}), z_k^t- z\rangle + \langle\mathcal{H}({z}_k^{t-1};\zeta_k^{2t-1}), \tilde{z}_k^t-z_k^t\rangle\\
     =& \langle\mathcal{H}({z}_k^{t-1};\zeta_k^{2t-1}) - \mathcal{H}(\tilde{z}_k^{t};\zeta_k^{2t}), \tilde{z}_k^t-z_k^t\rangle + \langle H(\tilde{z}_k^t), \tilde{z}_k^t-z\rangle + \langle \Delta_k^{2t}, \tilde{z}_k^t-z\rangle \\
     =& \langle H({z}_k^{t-1}) - H(\tilde{z}_k^{t}), \tilde{z}_k^t-z_k^t\rangle + \langle \Delta_k^{2t-1} - \Delta_k^{2t}, \tilde{z}_k^t-z_k^t\rangle + \langle H(z), \tilde{z}_k^t-z\rangle + \langle \Delta_k^{2t}, \tilde{z}_k^t-z\rangle\\
     \geq &-\frac{M}{2}\|\tilde{z}_k^t- z_k^{t-1}\|^2-\frac{M}{2}\|\tilde{z}_k^t- z_k^t\|^2 - \delta + \langle \Delta_k^{2t-1} - \Delta_k^{2t}, \tilde{z}_k^t-z_k^t\rangle\\ & + \langle H(z), \tilde{z}_k^t-z\rangle + \langle \Delta_k^{2t}, \tilde{z}_k^t-z\rangle \\
     \geq &-MV(z_k^{t-1}, \tilde{z}_k^t)-MV(\tilde{z}_k^t, z_k^t) - \delta - \frac{\|\Delta_k^{2t-1}+\Delta_k^{2t}\|^2}{2M} - \frac{M}{2}\|\tilde{z}_k^t - z_k^t\|^2\\ & + \langle H(z), \tilde{z}_k^t-z\rangle + \langle \Delta_k^{2t}, \tilde{z}_k^t-z\rangle\\
     \geq &-MV(z_k^{t-1}, \tilde{z}_k^t)-2MV(\tilde{z}_k^t, z_k^t) - \delta - \frac{1}{M}(\|\Delta_k^{2t-1}\|^2+\|\Delta_k^{2t}\|^2)\\ & + \langle H(z), \tilde{z}_k^t-z\rangle + \langle \Delta_k^{2t}, \tilde{z}_k^t-z\rangle.\end{align*}
Uniting the inequality above with \eqref{eq_3_1_1} and the inequality between $M, \beta_k, \eta_k^t$ we obtain the following estimate:
    \begin{equation}
    \label{eq_3_1_1_new}
        \begin{aligned}
            &\langle G(\underline{z}_k)+ H(z), \tilde{z}_k^t - z \rangle \\
            \leq & \beta_k V(z_{k-1}, {z})- \beta_k V(z_{k-1}, \tilde{z}_k^t) + \eta_k V(z_{k}^{t-1}, {z}) - (\beta_k+\eta_k^t) V(z_{k}^t, {z})\\
            & +\frac{1}{M}(\|\Delta_k^{2t-1}\|^2+\|\Delta_k^{2t}\|^2)-\langle\Delta_k^{2t}, \tilde{z}_k^t-z\rangle+\delta, \forall z\in Z.
        \end{aligned}
    \end{equation}

Uniting the relation above with \eqref{eq_3_1_2} and \eqref{eq_3_1_3} we obtain the following estimate:
    \begin{align*}
        &\langle G(\underline{z}_k) + H(z), \tilde{z}_k-z\rangle\\
        \leq & \beta_k\left(V(z_{k-1},z)+V(w_k^{t-1}, z)\right)-\beta_k V(z_{k-1}, \tilde{z}_k^t)\\
        &+\eta_k^t (V(z_{k}^{t-1}, z)+V(w_k^{t-1}, z))-(\beta_k+\eta_k^t) \left(V(z_k^t, z)+V(w_k^t, z)\right)\\
        &+\frac{1}{2M}(2\|\Delta_k^{2t-1}\|^2+3\|\Delta_k^{2t}\|^2)-\langle\Delta_k^{2t}, \tilde{z}_k^t-\tilde{w}_k^t \rangle+\delta, \forall z\in Z.
    \end{align*}
Taking the average of estimation above from $t=\overline{1, T_k}$ and recalling convexity of Bregman divergence  $V(z, \cdot), \forall z$ we obtain the statement of this lemma.
\end{proof}

We see, that dependence on $\delta$ is similar to deterministic case. Further, we can obtain result about convergence of Algorithm \ref{alg:mps_stochastic} and prove the same accumulation of $\delta$.

\begin{lemma}
\label{lemma:mps_stochastic}
    Let Assumption \ref{assump:h} hold. Let the conditions of Lemma \ref{stoch_lemma}  and the following conditions hold:
    $$\gamma_1=1, \gamma_k\in [0,1], \beta_k\geq L\gamma_k, \forall k\geq 1,$$
    $$\frac{\gamma_k}{\Gamma_k}\left(\beta_k +\frac{\eta_k^1}{T_k}\right)\leq \frac{\gamma_{k-1}(\beta_{k-1}+\eta_{k-1}^{T_k-1})}{\Gamma_{k-1}T_{k-1}},\forall k\geq 2.$$
    We have
    $$\mathbb{E}\left[\sup_{z\in Z}Q(\overline{z}_N, z)\right]\leq \Gamma_N\left(\beta_1+\frac{\eta^1_1}{T_1}\right)\Omega_{z_0}^2 + \Gamma_N \frac{5\sigma^2}{2M} \sum\limits_{k=1}^n \frac{\gamma_k}{\Gamma_k} +\delta \Gamma_N \sum\limits_{k=1}^N \frac{\gamma_k}{\Gamma_k}.$$
\end{lemma}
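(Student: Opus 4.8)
The plan is to mimic the structure of the proof of Lemma \ref{lemma:mps_deterministic}, but starting from the stochastic inner estimate of Lemma \ref{stoch_lemma} instead of the deterministic one, and then taking expectations at the end. First I would combine Lemma \ref{lemma:ext_determenistic} (which, as the text notes, is unchanged in the non-smooth/stochastic setting since it only concerns the $G$-part and the already-computed iterates) with Lemma \ref{stoch_lemma}. Using the strong convexity of the Bregman divergence $V(\cdot,\cdot)$ to absorb the $\frac{L\gamma_k}{2}\|\tilde z_k - z_{k-1}\|^2$ term into $-\beta_k V(z_{k-1},\tilde z_k)$ (legitimate because $\beta_k \ge L\gamma_k$), I get a one-step recursion of the form
\begin{align*}
    &\left[G(\overline z_k)-G(z)+\langle H(z),\overline z_k-z\rangle\right] - (1-\gamma_k)\left[G(\overline z_{k-1})-G(z)+\langle H(z),\overline z_{k-1}-z\rangle\right]\\
    \leq & \gamma_k\Bigg[\left(\beta_k+\frac{\eta_k^1}{T_k}\right)\left(V(z_{k-1},z)+V(w_{k-1},z)\right) - \frac{\beta_k+\eta_k^{T_k}}{T_k}\left(V(z_k,z)+V(w_k,z)\right)\\
    & + \frac{1}{T_k}\sum_{t=1}^{T_k}\frac{1}{2M}\left(2\|\Delta_k^{2t-1}\|^2+3\|\Delta_k^{2t}\|^2\right) - \frac{1}{T_k}\sum_{t=1}^{T_k}\langle\Delta_k^{2t},\tilde z_k^t-\tilde w_k^t\rangle + \delta\Bigg].
\end{align*}

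Next I would divide by $\Gamma_k$, telescope the sum over $k=1,\dots,N$, and use the parameter conditions $\gamma_1=1$ and $\frac{\gamma_k}{\Gamma_k}(\beta_k+\frac{\eta_k^1}{T_k}) \le \frac{\gamma_{k-1}(\beta_{k-1}+\eta_{k-1}^{T_k-1})}{\Gamma_{k-1}T_{k-1}}$ so that the "incoming" $V(z_{k-1},z)+V(w_{k-1},z)$ term at step $k$ is dominated by the "outgoing" $V(z_{k-1},z)+V(w_{k-1},z)$ term left over from step $k-1$; the auxiliary iterates $w_k$ play exactly the role of a shadow sequence, and since $w_0 = z_0$ (from its defining recursion with the convention matching $z_0$), the leftover boundary term is $\Gamma_N(\beta_1+\frac{\eta_1^1}{T_1})(V(z_0,z)+V(z_0,z))$ — wait, more carefully it collapses to the single $\Omega_{z_0}^2$-type term as in the statement, using $V(z_0,z)\le \Omega_{z_0}^2$ and discarding the nonpositive final $-\frac{\gamma_N(\beta_N+\eta_N^{T_N})}{T_N}(V(z_N,z)+V(w_N,z))$ term. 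This leaves $Q(\overline z_N,z)$ bounded by $\Gamma_N(\beta_1+\frac{\eta_1^1}{T_1})\Omega_{z_0}^2$, plus $\delta\Gamma_N\sum_k \gamma_k/\Gamma_k$, plus the accumulated stochastic remainder $\Gamma_N\sum_{k=1}^N \frac{\gamma_k}{\Gamma_k}\cdot\frac{1}{T_k}\sum_{t=1}^{T_k}\left[\frac{1}{2M}(2\|\Delta_k^{2t-1}\|^2+3\|\Delta_k^{2t}\|^2) - \langle\Delta_k^{2t},\tilde z_k^t-\tilde w_k^t\rangle\right]$.

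The last step, and the main obstacle, is to take expectations of the stochastic remainder. Here I would use Assumption \ref{assump:h}: by the variance bound, $\E\|\Delta_k^{2t-1}\|^2\le\sigma^2$ and $\E\|\Delta_k^{2t}\|^2\le\sigma^2$, so $\E\left[\frac{1}{2M}(2\|\Delta_k^{2t-1}\|^2+3\|\Delta_k^{2t}\|^2)\right]\le \frac{5\sigma^2}{2M}$. For the cross term, the key point — and the one requiring care — is that $\tilde w_k^t$ (defined in \eqref{w_tilde_def}) and $\tilde z_k^t$ are measurable with respect to the randomness up to and including $\zeta_k^{2t-1}$ but before $\zeta_k^{2t}$: indeed $\tilde w_k^t$ is built from $w_{k-1}$ and $w_k^{t-1}$, which depend only on $\Delta_k^{2s}$ for $s<t$, and $\tilde z_k^t$ depends on $\zeta_k^{2t-1}$ but not $\zeta_k^{2t}$. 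Hence by the tower property and unbiasedness $\E_{\zeta_k^{2t}}[\Delta_k^{2t}] = 0$ conditionally, so $\E\langle\Delta_k^{2t},\tilde z_k^t-\tilde w_k^t\rangle = 0$. (This is precisely why the auxiliary sequence $w_k^t$ is introduced: to produce a $\tilde w_k^t$ that makes the inner product mean-zero.) Plugging these in and using $\Gamma_N\sum_{k=1}^N\frac{\gamma_k}{\Gamma_k}\cdot\frac{1}{T_k}\sum_{t=1}^{T_k}\frac{5\sigma^2}{2M} = \Gamma_N\frac{5\sigma^2}{2M}\sum_{k=1}^N\frac{\gamma_k}{\Gamma_k}$ yields the claimed bound. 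The only subtlety worth double-checking is the measurability/independence bookkeeping for the cross term and the definition of $\Delta_k^{2t}$ in \eqref{delta_def} (which as written references $H(\tilde z_k^{t-1})$ rather than $H(\tilde z_k^t)$ — I would treat this as a typo and use the version consistent with Assumption \ref{assump:h}).
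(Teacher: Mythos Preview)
Your proposal is correct and follows essentially the same approach as the paper: combine Lemma~\ref{lemma:ext_determenistic} with Lemma~\ref{stoch_lemma}, use $\beta_k\ge L\gamma_k$ and strong convexity of $V$ to absorb the $\frac{L\gamma_k}{2}\|\tilde z_k-z_{k-1}\|^2$ term, divide by $\Gamma_k$, telescope under the parameter condition, and finally take expectations using Assumption~\ref{assump:h}. In fact your write-up is more explicit than the paper's proof, which simply states the one-step recursion and then says ``we immediately obtain the result''; your measurability argument for $\E\langle\Delta_k^{2t},\tilde z_k^t-\tilde w_k^t\rangle=0$ and the bound $\E[\frac{1}{2M}(2\|\Delta_k^{2t-1}\|^2+3\|\Delta_k^{2t}\|^2)]\le\frac{5\sigma^2}{2M}$ are exactly the missing details, and your observation about the typo in \eqref{delta_def} is apt.
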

\begin{proof}
Combining the results of Lemmas \ref{lemma:ext_determenistic} and \ref{stoch_lemma} and recalling the strong-convexity of $V(\cdot, \cdot)$ we obtain the following inequality:
    \begin{align*}
    &\left[G(\overline{z}_k)-G(z)+\langle H(z), \overline{z}_k-z\rangle\right]\\
    & -(1-\gamma_k)\left[G(\overline{z}_{k-1})-G(z)+\langle H(z), \overline{z}_{k-1}-z\rangle\right]\\
    \leq & \gamma_k\left[\left(\beta_k + \frac{\eta_k^1}{T_k}\right)\left(V(z_{k-1}, z) + V(w_{k-1}, z)\right) - \frac{\beta_k + \eta_k^{T_k}}{T_k}\left(V(z_k, z) + V(w_k, z)\right)\right]\\
    & + \frac{\gamma_k}{2MT_k} \sum\limits_{t=1}^{T_k} \left(2\|\Delta_k^{2t-1}\|^2 + 3\|\Delta_k^{2t}\|^2 \right)- \frac{\gamma_k}{T_k} \sum\limits_{t=1}^{T_k}\langle \Delta_k^{2t}, \tilde{z}_k^t - \tilde{w}_k^t\rangle+\gamma_k \delta, \forall z\in Z.
\end{align*}
Dividing the above relation by $\Gamma_k$, summing from $k = 1 \dots, N$, and recalling our assumption
of parameters, we immediately obtain the result of the current lemma.
\end{proof}

As we proved for the deterministic case for $\Gamma_k =\frac{2}{k(k+1)}$ and $\gamma_k = \frac{2}{k+1}$ the sum $\Gamma_N \sum\limits_{k=1}^N \frac{\gamma_k}{\Gamma_k}$ equals 1. So, we have the following result.

\begin{theorem}
\label{theorem_mps_stochastic}
    Suppose that Assumption \ref{assump:h} holds and parameters in the outer iterations of Algorithm \ref{alg:mps_stochastic} are set to:
    $$\gamma_k=\frac{2}{k+1},\beta_k=\frac{2L}{k},T_k=\left\lceil\frac{\sqrt{3}kM}{L}+\frac{N k^2\sigma^2}{\Omega_{z_0}^2L^2}\right\rceil, \eta_k^t=\beta_k(t-1)+\frac{LT_k}{k}.$$
    In order to compute an approximate solution $\overline{z}_N$ such that $\mathbb{E}\sup_{z\in Z}Q(\overline{z}_N,z)\leq \varepsilon+\delta$, the number of evaluations of
gradients $\nabla G(\cdot)$ and operators $H(\cdot)$ are bounded by
$$N_{\nabla G}=O\left(\sqrt{\frac{L\Omega_{z_0}^2}{\varepsilon}}\right),\quad N_H=O\left(\frac{M\Omega_{z_0}^2}{\varepsilon}+\frac{\sigma^2\Omega_{z_0}^2}{\varepsilon^2}+\sqrt{\frac{L\Omega_{z_0}^2}{\varepsilon}}\right),$$
respectively, where $\Omega_{z_0}^2=\sup_{z\in Z} V(z_0, z).$
\end{theorem}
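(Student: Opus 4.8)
The plan is to follow the template of the proof of Theorem~\ref{theorem_mps_deterministic}: first check that the stated parameters satisfy the hypotheses of Lemma~\ref{lemma:mps_stochastic}, then substitute them into the one-step inequality behind that lemma and simplify, and finally read off the two oracle counts. Throughout I use that the chosen $\gamma_k$ give $\Gamma_k = \frac{2}{k(k+1)}$ and $\frac{\gamma_k}{\Gamma_k} = k$, together with $\sum_{k=1}^N k = \frac{N(N+1)}{2}$ and $\sum_{k=1}^N k^2 = \Theta(N^3)$; moreover $\eta_1^1 = LT_1$ and $\beta_1 = 2L$, so $\beta_1 + \frac{\eta_1^1}{T_1} = 3L$.

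First I would verify the hypotheses of Lemma~\ref{lemma:mps_stochastic}. The relations $\gamma_1 = 1$, $\gamma_k \in [0,1]$, $\beta_k = \frac{2L}{k} \geq \frac{2L}{k+1} = L\gamma_k$, the monotonicity $\eta_k^t \leq \beta_k + \eta_k^{t-1}$ (in fact an equality, since $\eta_k^t = \beta_k(t-1) + \frac{LT_k}{k}$), and the ratio condition (both sides equal $3L$) are exactly those verified in \cite{Lan_MPS} for the deterministic parameters, and none of them is affected by enlarging $T_k$. The only hypothesis that interacts with the extra summand in $T_k$ is $M \leq \beta_k + \eta_k^t$; but $\beta_k + \eta_k^t \geq \frac{LT_k}{k} \geq \frac{L}{k} \cdot \frac{\sqrt{3}\,kM}{L} = \sqrt{3}\,M$, so it holds with room to spare.

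Next I would substitute the parameters into the bound of Lemma~\ref{lemma:mps_stochastic}. The first term becomes $\Gamma_N \cdot 3L\,\Omega_{z_0}^2 = \frac{6L\Omega_{z_0}^2}{N(N+1)}$ and the $\delta$-term becomes $\delta\,\Gamma_N \sum_{k=1}^N k = \delta$. The crux is the variance term: here I would keep the sharp prox-coefficient $\frac{1}{2(\beta_k + \eta_k^t)}$ that actually comes out of \eqref{eq_3_1_3} rather than bound it by the cruder $\frac{1}{2M}$, since the latter does not decay. Using Assumption~\ref{assump:h} and $\beta_k + \eta_k^t \geq \frac{LT_k}{k}$, the variance contribution of outer step $k$ is $O(\frac{\sigma^2 k}{L T_k})$, and the choice $T_k \geq \frac{Nk^2\sigma^2}{\Omega_{z_0}^2 L^2}$ turns this into $O(\frac{L\Omega_{z_0}^2}{Nk})$; weighting by $\frac{\gamma_k}{\Gamma_k} = k$, summing over $k$, and multiplying by $\Gamma_N$ leaves a total of order $\Gamma_N \cdot N \cdot \frac{L\Omega_{z_0}^2}{N} = O(\frac{L\Omega_{z_0}^2}{N^2})$, i.e. the same rate as the deterministic part. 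The cross terms $\langle \Delta_k^{2t}, \tilde z_k^t - \tilde w_k^t \rangle$ have zero conditional expectation, by Assumption~\ref{assump:h} and the definition of $\tilde w_k^t$ in \eqref{w_tilde_def} (neither $\tilde z_k^t$ nor $\tilde w_k^t$ depends on $\zeta_k^{2t}$), so they vanish in expectation. Finally, to pass from the per-$z$ estimate to $\mathbb{E}\sup_{z \in Z}$ one uses that, after telescoping, the only $z$-dependent term left on the right is $V(z_0,z) + V(w_0,z) = O(\Omega_{z_0}^2)$ uniformly in $z$, while the surviving prox-term $-\frac{\gamma_N(\beta_N + \eta_N^{T_N})}{\Gamma_N T_N}(V(z_N,z) + V(w_N,z)) \leq 0$. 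Altogether $\mathbb{E}\sup_{z \in Z} Q(\overline z_N, z) = O(\frac{L\Omega_{z_0}^2}{N^2}) + \delta$.

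It remains to choose $N$ and count oracle calls. Taking $N = \Theta(\sqrt{L\Omega_{z_0}^2/\varepsilon})$ makes the right-hand side at most $\varepsilon + \delta$. Since $\nabla G$ is evaluated only once per outer iteration (at $\underline z_k$), this gives $N_{\nabla G} = O(\sqrt{L\Omega_{z_0}^2/\varepsilon})$. Since $\mathcal{H}$ is evaluated twice per inner iteration, $N_H = 2\sum_{k=1}^N T_k \leq 2\sum_{k=1}^N (1 + \frac{\sqrt{3}\,kM}{L} + \frac{Nk^2\sigma^2}{\Omega_{z_0}^2 L^2}) = O(N + \frac{M}{L}\sum_{k=1}^N k + \frac{N\sigma^2}{\Omega_{z_0}^2 L^2}\sum_{k=1}^N k^2)$, i.e. three contributions of orders $N$, $\frac{MN^2}{L}$ and $\frac{N^4\sigma^2}{\Omega_{z_0}^2 L^2}$; substituting the value of $N$ into each then yields the stated bound on $N_H$. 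The main obstacle is the variance term of the previous paragraph: one must carry the coefficient $\beta_k + \eta_k^t$ (rather than bound it by $M$) and then calibrate the stochastic part of $T_k$ so that the variance decays like $1/N^2$ while keeping $\sum_k T_k$ within the announced $\sigma^2$-term; the other delicate point is interchanging $\mathbb{E}$ and $\sup_z$.
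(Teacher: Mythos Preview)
Your approach is correct and in fact considerably more explicit than the paper's own argument, which is a two-sentence pointer to Corollary~3.3 of \cite{Lan_MPS} together with the observation that the multiplier of $\delta$ is $\Gamma_N\sum_{k=1}^N\gamma_k/\Gamma_k=1$. What you have written is essentially a reconstruction of that cited corollary, so the two approaches are the same in substance.

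You do make one point that the paper's own Lemmas~\ref{stoch_lemma}--\ref{lemma:mps_stochastic} obscure: if one uses Lemma~\ref{lemma:mps_stochastic} literally, the variance term $\Gamma_N\frac{5\sigma^2}{2M}\sum_k\frac{\gamma_k}{\Gamma_k}$ equals $\frac{5\sigma^2}{2M}$ and does not decay with $N$. Your fix---returning to \eqref{eq_3_1_3} and the Young-inequality step and keeping the sharp coefficient $\frac{1}{\beta_k+\eta_k^t}\le\frac{k}{LT_k}$ instead of bounding it by $\frac{1}{M}$---is exactly what the Lan proof does, and is what makes the stochastic part of $T_k$ pay off. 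One small arithmetic remark: in your final substitution, $\frac{N^4\sigma^2}{\Omega_{z_0}^2L^2}$ with $N^2=\Theta(L\Omega_{z_0}^2/\varepsilon)$ gives $\sigma^2\Omega_{z_0}^2/\varepsilon^2$, not $\sigma^2\Omega_{z_0}^2/\varepsilon$; this is the standard stochastic rate and appears to be a misprint in the statement rather than an error on your side.
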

\begin{proof}
The proof of this statement is similar to proof of Corollary 3.3 from the work \cite{Lan_MPS}. The main difference is parameter $\delta$ and proof that it does not accumulate.

Note, that the parameters in this theorem satisfies condition from Lemma \ref{lemma:mps_stochastic}. Therefore, we have the following inequalities:
    \begin{align*}
        \mathbb{E}\left[\sup_{z\in Z}Q(\overline{z}_N, z)\right]&\leq \Gamma_N\left(\beta_1+\frac{\eta^1_1}{T_1}\right)\Omega_{z_0}^2 + \Gamma_N \frac{5\sigma^2}{2M} \sum\limits_{k=1}^n \frac{\gamma_k}{\Gamma_k} +\delta \Gamma_N \sum\limits_{k=1}^N \frac{\gamma_k}{\Gamma_k}\\
        &= \leq \Gamma_N\left(\beta_1+\frac{\eta^1_1}{T_1}\right)\Omega_{z_0}^2 + \Gamma_N \frac{5\sigma^2}{2M} \sum\limits_{k=1}^n \frac{\gamma_k}{\Gamma_k} +\delta\\
        &\leq \frac{12L\Omega_{z_0}}{N(N+1)} + \frac{1}{N(N+1)}\sum\limits_{k=1}^N \frac{7\sigma^2 k^2}{LT_k}+\delta\\
        &\leq\frac{19L\Omega_{z_0}}{N^2} +\delta.
    \end{align*}
    Therefore, the given in this theorem $N_{\nabla G}$ calculations of $\nabla G$ is sufficient to approach required accuracy $\varepsilon+\delta.$ At the same time, the number of $\mathcal{H}$ calculations can be found in the following way:
    \begin{align*}
        N_H &= 2\sum\limits_{k=1}^{N_{\nabla G}} T_k\\
        &\leq 2\sum\limits_{k=1}^{N_{\nabla G}}\left(1+\frac{\sqrt{3}kM}{L}+\frac{N k^2\sigma^2}{\Omega_{z_0}^2L^2}\right)\\
        &=O\left(\frac{M\Omega_{z_0}^2}{\varepsilon}+\frac{\sigma^2\Omega_{z_0}^2}{\varepsilon^2}+\sqrt{\frac{L\Omega_{z_0}^2}{\varepsilon}}\right).
    \end{align*}
\end{proof}

So, we obtained that deterministic and stochastic Mirror-prox sliding converge to accuracy $\varepsilon+\delta$ for non-lipschitz operator $H$ with the same speed as for the lipschitz operator to accuracy $\varepsilon.$ In other words, this method can be applied for non-lipschitz operator too, but accuracy is limited by $\delta\geq 0$ from the inequality \eqref{delta_L_H}.

\section{Mirror-prox sliding for decentralized optimization}

\subsection{Decentralized problem Statement}\label{sec:decentr_spp}

Recall a sum-type saddle-point problem (SPP).
\begin{equation*}
    \begin{aligned}
	   \min_{x \in X} \max_{y \in Y} \left[f(x, y)=\frac{1}{m} \sum_{i=1}^m f_i(x, y)\right], 
	\end{aligned}
\end{equation*}
Introduce $\bx = \col(x_1\ldots x_m) \in \mathbb{R}^{md_x},~ \by = \col(y_1, \ldots, y_m)  \in \mathbb{R}^{md_y}$. We will note $X_m = \{\bx = \col(x_1, \ldots, x_m):~ x_1\in X,~\ldots,~ x_m\in\ X\}$ and analogously $Y_m = \{\by = \col(y_1, \ldots, y_m):~ y_1\in Y,~\ldots,~ y_m\in\ Y\}$. Also define $F(\bx, \by) = \sum_{i=1}^m f_i(x_i, y_i)$. We consider a decentralized saddle-point problem (SPP) with consensus constraints.
\begin{equation}\label{eq:problem_consensus_constraints}
    \begin{aligned}
	            \min_{\bx\in X_m}\max_{\by\in Y_m}~ &F(\bx, \by), \\
	    \text{s.t. } &x_1 = \ldots = x_m,  \\
	    & y_1 = \ldots = y_m.
	    \end{aligned}
\end{equation}
It is convenient to rewrite the consensus constraints as affine constraints using gossip matrices.
{
\begin{assumption}\label{assum:gossip_matrix}
	Gossip matrix $W$ satisfies the following properties.
	\item 1. $[W]_{ij} = 0$ if $i\neq j$ and $(i, j)\notin\cE$.
	\item 2. $W = W^\top$.
	\item 3. $Wx = 0$ if and only if $x_1 = \ldots = x_m$.
\end{assumption}
A typical choice for the gossip matrix is graph Laplacian $\cL(\cG)$, which is a matrix with entries
\begin{align*}
	[\cL]_{ij} = 
	\begin{cases}
			\deg(i), &\text{if } i = j, \\
			-1, &\text{if } (i, j)\in\cE, \\
			0, &\text{else}.
		\end{cases}
\end{align*}
Gossip matrix enables to rewrite the consensus constraints due to its kernel property. Let us introduce gossip matrices $\widetilde W_x$ and $\widetilde W_y$ for variables $x$ and $y$, respectively. Also define $\widetilde\mW_x = \widetilde W_x\otimes \mI,~ \widetilde\mW_y = \widetilde W_y\otimes\mI$. Then consensus constraints can be rewritten as $\mW_x\bx=\mathbf{0},~ \mW_y\by=\mathbf{0}$. Then we denote $\mW_x:=\widetilde{\mW}_x^{1/2}$ and $\mW_y:=\widetilde{\mW}_y^{1/2}$.} After that, we replace the consensus constraints of \eqref{eq:problem_consensus_constraints} with the affine constraints $\mW_x \bx=\mathbf{0}$, $\mW_y \by=\mathbf{0}$. Finally, we rewrite the affine constraints as a penalty and add it to the functional (we describe it in Section \ref{sec:spp_penalty}):
\begin{align}\label{eq:spp_penalty}
    \min_{\bx \in X_m} \max_{\by \in Y_m}~ \left[F_R(\bx, \by):=F(\bx, \by) + \frac{R_\alpha^2}{\eps}\norm{\mW_x \bx}_2^2 - \frac{R_\beta^2}{\eps}\norm{\mW_y \by}_2^2\right].
\end{align}
Here $\eps$ denotes the desired quality of the solution. The coefficients $R_\alpha, R_\beta$ should be chosen large enough so that solution of penalized problem \eqref{eq:spp_penalty} is a quite good approximation of non-penalized problem \eqref{eq:problem_consensus_constraints}. We discuss the choice of regularization coefficients $R_\alpha$ and $R_\beta$ in Section \ref{sec:spp_penalty}.

\subsection{Moving the affine constraints of SPP to penalty}\label{sec:spp_penalty}

In this section, we describe how regularization coefficients $R_\alpha, R_\beta$ in problem \eqref{eq:spp_penalty} affect the distance between the solutions of penalized problem \eqref{eq:spp_penalty} and original constrained problem \eqref{eq:problem_consensus_constraints}. We formulate the result for a general convex-concave SPP with affine constraints.


First, we formulate a result for (not min-max) optimization. Consider some optimization problem over network with gossip matrix $\mW$:
\begin{align}\label{eq:linear_constraints}
   \min_{\substack{\mW \bx=\mathbf{0}\\\bx\in Q}} u(\bx).
\end{align}
This problem is independent of the initial saddle-point problem. For this problem, we can to unit results  from works \cite{gorbunov2019} and \cite{lan2017communication} to the following lemma.
\begin{lemma}\label{lem:penalty}
    Let $u(\bx)$ be a convex function defined over a convex non-empty set $Q$ and let $\kernel \mW\ne \{0\}$ in problem \eqref{eq:linear_constraints} and $R$ be a bound on the dual solution size, i.e. there exists such solution $q^*$ of the problem dual to \eqref{eq:linear_constraints}, such that $\normtwo{q^*}\le R$. Besides, introduce the function $U(\bx) := u(\bx) + \frac{R^2}{\eps}\|\mW \bx\|_2^2.$
 and two points $\ds \bx_0 = \argmin_{\bx\in Q} U(\bx)$, $\ds \bx^* = \argmin_{\substack{\mW \bx=\mathbf{0} \\ \bx\in Q}} u(\bx)$. Then the following statements holds:
	\begin{enumerate}
		\item If $\displaystyle U(\tilde \bx) - \min_{\bx\in Q} U(\bx)\le \eps$, then we have $u(\tilde \bx) -  u(\bx^*) \le \eps$ and $\normtwo{\mW \tilde \bx}\le 2\eps / R.$
		\item Let $\partial u(\mathbf{x})$ be a subdifferential of the function $u$ at point $\mathbf{x}$. If $R$ satisfies the following inequality:
			$R^2\ge \dfrac{\sup_{v\in\partial u(\bx^*)}\normtwo{v}^2}{\lambda_{\min}^+(\mW^\top \mW)},$
  then we have such solution $q^*$ of the problem dual to \eqref{eq:linear_constraints}, such that $\|q^*\|\leq R.$  
		\item For points $\bx^*$ and $\bx_0$ the inequality $0 \le u(\bx^*) - U(\bx_0) \le 2\eps$ holds.
	\end{enumerate}
\end{lemma}

\begin{proof}

    By definition of $U$, we have the following inequalities for all $\bx$:
    \begin{align*}
        U(\bx)- U(\bx_0) &= u(\bx) + \frac{R^2}{\varepsilon}\|\mW\bx\|_2^2 - \min_{\bx\in Q}\left[u(\bx) + \frac{R^2}{\varepsilon}\|\mW \bx\|_2^2\right]\\
        &\geq u(\bx) + \frac{R^2}{\varepsilon}\|\mW\bx\|_2^2 - \min_{\mW \bx=\mathbf{0}, \bx\in Q}\left[u(\bx) + \frac{R^2}{\varepsilon}\|\mW\bx\|_2^2\right]\\
        &=u(\bx)-u(\bx^*)+\frac{R^2}{\varepsilon}\|\mW\bx\|_2^2.
    \end{align*}
    Therefore, we have the following estimation:
    \begin{equation}\label{lemma_1_eq}
         U(\bx)- U(\bx_0) \geq u(\bx)-u(\bx^*)+\frac{R^2}{\varepsilon}\|\mW\bx\|_2^2
    \end{equation}
    If $U(\tilde{\bx})- U(\bx_0)\leq \varepsilon$ then $=u(\tilde{\bx})-u(\bx^*)\leq \varepsilon$ too.

   Following the arguments in \cite{lan2017communication}, we denote $\by^*$ the solution of the dual and write the definition of the saddle point.
    \begin{align*}
        u(\bx^*) + \angles{\by, \mW\bx^*} \leq u(\bx^*) + \angles{\by^*, \mW\bx^*} \leq u(\bx) + \angles{\by^*, \mW\bx}.
    \end{align*}
    Since $\mW\bx^* = 0$, we have the following estimation:
    \begin{equation}\label{lemma_2_eq}
        u(\bx) - u(\bx^*)\geq \angles{\by^*, \mW(\bx^* - \bx)} = \angles{-\mW^\top\by^*, \bx - \bx^*}.
    \end{equation}
    From \eqref{lemma_2_eq}  and $\|\by^*\|\leq R$, we have that $u(\bx) - u(\bx^*) \geq -R\|\mW\bx\|.$ 
    Then using this result and \eqref{lemma_1_eq} we have $U(\bx)- U(\bx_0) \geq \frac{R^2}{\varepsilon}\|\mW\bx\|_2^2-R\|\mW\bx\|, \forall \bx.$ In particular, for $\tilde{\bx}$ the estimation $\frac{R^2}{\varepsilon}\|\mW\bx\|_2^2-R\|\mW\bx\| \leq \varepsilon$ holds. Because of it, we conclude that the point $\tilde{\bx}$ satisfies the condition $\|\mW\bx\|\leq \frac{2\varepsilon}{R}$.

    Further, we prove statement 2. Because of \eqref{lemma_2_eq}, $-\mW^\top\by^*\in\partial u(\bx^*)$. If we take the dual solution $\by^*\in (\kernel\mW^\top)^\bot$, we have:
    \begin{align*}
        \norm{\by^*}_2^2\leq \frac{\sup_{v\in\partial u(\bx^*)}\norm{v}_2^2}{\lambda_{\min}^+(\mW^\top\mW)}.
    \end{align*}
    So, it proves the second statement.
    
    Let us prove statement 3. The first inequality is obvious: $U(\bx_0)\le U(\bx^*) = u(\bx^*)$. In order to prove the second inequality, we use the convexity of $u$:
	\begin{align*}
		u(\bx^*) - u(\bx_0)
		&\le \angles{\nabla u(\bx^*), \bx^* - \bx_0}\\
		&\le \normtwo{\nabla u(\bx^*)}\cdot\normtwo{\bx^* - \bx_0}
		\\&\le \normtwo{\nabla u(\bx^*)}\cdot\frac{\normtwo{\mW \bx_0}}{\sqrt{\lambda_{\min}^+(\mW^\top \mW)}}.
	\end{align*}
	Since $\ds U(\bx_0) - \min_{\bx\in X} U(\bx) = 0\le \eps$, from statement 1 it follows that $\normtwo{\mW \bx_0}\le 2\eps/R$. Substituting the bound on $R$ from statement 2, we obtain
	\begin{align*}
		u(\bx^*) - u(\bx_0) &\le \frac{\normtwo{\nabla u(\bx^*)}}{\sqrt{\lambda_{\min}^+(\mW^\top \mW)}}\cdot \frac{2\eps}{R}
		\le 2\eps, \\
		u(\bx^*) - U(\bx_0) &\le 2\eps - \frac{R^2}{\eps}\normtwo{\mW \bx}^2
		\le 2\eps.
	\end{align*}
\end{proof}

{This lemma gives the connection between value $R$ and required accuracy. Note, that it depends on maximal norm of subgradient at point solution and conditional number of communication network.}

{Further, we choose coefficients $R_\alpha$ and $R_\beta$ to bound the solution of corresponding dual problems.}

{
\begin{lemma}\label{lem:r_alpha_beta}
    Let us define the following:
    $$f(\bx)=\max_{\by\in Y_m} F_R(\bx, \by),~~ \bx^*=\argmin_{\bx\in X_m: \mW\bx=\mathbf{0}} f(\bx),$$ $$\by^*(\bx)=\argmax_{\by \in Y_m: \mW\by=\mathbf{0}} F_R(\bx, \by),~~ g_\bx(\by):=F(\bx, \by).$$ If $R_\alpha$ and $R_\beta$ are defined as:
\begin{equation}
\label{R_alpha_beta_det}
      R_\alpha^2 = \frac{\sup_{\mathbf{v}\in \partial f(\bx^*)}\|\mathbf{v}\|^2}{\lambda_{\min}^+(\mW_x^\top\mW_x)},\quad R_\beta^2 = \sup_{\bx\in X_m}\frac{\sup_{\mathbf{v}\in \partial (-g_\bx(\by^*(\bx))} \|\mathbf{v}\|^2 }{\lambda_{\min}^+(\mW_y^\top\mW_y)},
\end{equation}
then the following statements hold:
\begin{enumerate}
    \item there exists the solution $q_x^*$ of dual problem for problem $\ds\min_{\substack{\bx\in X_m\\ \mW_x \bx=\mathbf{0}}} f(\bx)$ such that $\|q_x^*\|\leq R_{\alpha}$.
    \item there exists the solution $q_y^*(\bx)$ of dual problem for problem $\ds\max_{\substack{\by\in Y_m\\ \mW_y \by=\mathbf{0}}} g_{\bx}(\by)$ such that $\|q_y^*(\bx)\|\leq R_{\beta}$ for any $\bx \in X_m.$
\end{enumerate}
\end{lemma}
}
\begin{proof}
    Firstly, note that problem $\ds\min_{\substack{\bx\in X_m\\ \mW_x \bx=\mathbf{0}}} f(\bx)$ is the particular case of problem \eqref{eq:linear_constraints}. So, according to the second statement of Lemma \ref{lem:penalty} we have that for given $R_{\alpha}$ the first statement of this lemma holds.

    At the same time, for any fixed $\bx\in X_m$ the problem $\ds\max_{\substack{\by\in Y_m\\ \mW_y \by=\mathbf{0}}} g_{\bx}(\by)$ is the particular problem of \eqref{eq:linear_constraints} too. Using the second statement of Lemma \ref{lem:penalty} again, we have $\|q_y^*(\bx)\|\leq \frac{\sup_{\mathbf{v}\in \partial (-g_\bx(\by^*(\bx))} \|\mathbf{v}\|^2 }{\lambda_{\min}^+(\mW_y^\top\mW_y)}$ for any fixed $\bx \in X_m.$ Therefore, the following inequality holds:
    $$\|q_y^*(\bx)\|\leq \sup_{\bx\in X_m}\frac{\sup_{\mathbf{v}\in \partial (-g_\bx(\by^*(\bx))} \|\mathbf{v}\|^2 }{\lambda_{\min}^+(\mW_y^\top\mW_y)}=R_{\beta}.$$
\end{proof}

{
Now, we generalize the results of Lemma~\ref{lem:penalty} to saddle point problems.
\begin{lemma}\label{lem:spp_penalty}
	Let $(\tilde\bx, \tilde\by)$ be a pair such that
	\begin{align*}
		\max_{\by\in Y_m} F_R(\tilde\bx, \by) - \min_{\bx\in X_m} F_R(\bx, \tilde\by)\leq \eps,
	\end{align*}
	where $F_R$ is introduced in \eqref{eq:spp_penalty}. Coefficients $R_\alpha$ and $R_\beta$ are defined according to \eqref{R_alpha_beta_det}. Then it holds
	\begin{align*}
	\ds\max_{\substack{\mW_y\by=\mathbf{0}\\ \by\in Y_m}} F(\tilde\bx, \by) - \min_{\substack{\mW_x\bx=\mathbf{0}\\ \bx\in X_m}} F(\bx, \tilde\by)\leq \eps
	\end{align*}
	and $\norm{\mW_x\tilde\bx}_2\leq 2\eps/R_\alpha,~ \norm{\mW_y\tilde\by}_2\leq 2\eps/R_\beta$.
\end{lemma}
}
\begin{proof}    
{
	Firstly, uniting the inequality $\ds\max_{\by\in Y_m} F_R(\tilde\bx, \by) - \min_{\bx\in X_m} F_R(\bx, \tilde\by)\leq \eps$, definition of $R_\beta$ and Lemmas \ref{lem:penalty}, \ref{lem:r_alpha_beta} we obtain that $\|\mW_y\tilde{\by}\|\leq {2\varepsilon}/{R_\beta}.$
}
{	
	Further, let us denote $g(\bx),\by^*(\tilde\bx)$ and $\bx^*$ as in Lemma\ref{lem:r_alpha_beta}. Using condition of this lemma with $\by:=\by^*(\tilde\bx)$ and $\bx:=\bx^*$, we obtain 
	$g(\tilde\bx)-F_R(\bx^*, \tilde\by)\leq \varepsilon.$ At the same time, $F_R(\bx^*, {\by})\leq g(\bx^*)$ holds for all $\by \in Y_m$ by definition of $g.$ Therefore,
	the point $\tilde\bx$ satisfies to the estimation $g(\tilde\bx)-g(\bx^*)\leq 2\varepsilon$ too. Besides, because of definition $R_\alpha$ we have that $\|\mW_x \tilde\bx\|\leq {2\varepsilon}/{R_\alpha}.$
}
{
	Finally, note according to definition $F_R$ and lemma condition we have the following condition for all $\forall \bx \in X_m, \by \in Y_m,$ such that $x_1=\dots=x_m$ and $y_1=\dots y_m$ (i.e. $\mW_x \bx=\mathbf{0},\mW_y\by=\mathbf{0}$):
	$$F(\tilde\bx, \by)-F(\bx, \tilde\by)+\left(\frac{R_\alpha^2}{\varepsilon}\|\mW_x \tilde\bx \|^2 + \frac{R_\beta^2}{\varepsilon}\|\mW_y \tilde\by \|^2\right)\leq \varepsilon, $$
	Using this inequality, we conclude 
	$F(\tilde\bx, \by)-F(\bx, \tilde\by)\leq \varepsilon$ for all $\forall \bx \in X_m, \by \in Y_m,$ such that $\mW_x \bx=\mathbf{0},\mW_y\by=\mathbf{0}.$
}
\end{proof}

\subsection{Main Result}
\label{sec:main_result}

Given reformulation \eqref{eq:spp_penalty}, we apply the results of \cite{Lan_MPS}. We let $\bz = \col(\bx, \by)\in \mathbb{R}^{m(d_x+d_y)}$. We will also denote $Z_m = \{\bz = \col(\bx, \by):~ \bx\in X,~ \by\in Y\}$. In the denotations of paper \cite{Lan_MPS}, we have:
\begin{align*}
    H(\bz) =
    \begin{pmatrix}
	        \nabla_\bx F(\bx, \by) \\ -\nabla _\by F(\bx, \by)
	    \end{pmatrix},~
    G(\bz) = \frac{R_\alpha^2}{\eps}\norm{\mW_x\bx}_2^2 + \frac{R_\beta^2}{\eps}\norm{\mW_y\by}_2^2,
\end{align*}
Note, that if the function $F$ is smooth function, than $H$ is Lipschitz operator and the result from original work can be applied.


Note, that in new notation the problem \eqref{eq:spp_penalty} is equivalent to the following strong variational inequality (VI):
\begin{equation}\label{strong_vi_problem_bz}
    \langle H (\bz)+\nabla G(\bz), \bz^*-\bz\rangle \leq 0, \forall \bz \in Z_m.
\end{equation}
{
At the same time, the operator $K(\bz)=H (\bz)+\nabla G(\bz)$ is monotone, i.e. 
$$\langle K(\bz_1)-K(\bz_2), \bz_1-\bz_2 \rangle \geq 0,\forall \bz_1, \bz_2 \in Z_m.$$
It means that the problem of strong VI \eqref{strong_vi_problem_bz} is equivalent to the following weak VI:
\begin{equation}\label{vi_problem_bz}
    \langle H (\bz)+\nabla G(\bz), \bz^*-\bz\rangle \leq 0, \forall \bz \in Z_m.
\end{equation}
}

We impose the following assumption on operator $H$.
\begin{assumption}\label{assum:field_h_distr}
There exist constants $\delta, M \geq 0$ such that for any $\bz_1, \bz_2, \bz_3\in Z_m$ it holds
\begin{equation}
\label{delta_L_H_bz}
    \langle H(\bz_1)-H(\bz_2), \bz_1-\bz_3\rangle \leq \frac{M}{2}\| \bz_1- \bz_2\|^2+\frac{M}{2}\|\bz_1-\bz_3\|^2+\delta.
\end{equation}
\end{assumption}
\begin{remark}\label{remark:assumption_examples}
    Let us consider two cases when Assumption~\ref{assum:field_h_distr} holds.\\
    1. Let $H$ be a $L$-Lipschitz field, i.e. for any $\bz_1, \bz_2\in Z_m$ it holds $\norm{H(\bz_1) - H(\bz_2)}\leq L\norm{\bz_1 - \bz_2}$. As shown in \cite{Lan_MPS}, Assumption \ref{assum:field_h_distr} holds with $M = L$ and $\delta = 0$.\\
    2. Let $H$ be bounded, i.e. for any $\bz\in Z_m$ it holds $\norm{H(\bz)}^2\leq C$. Then for any $\gamma > 0$ we have
    $$\langle H(\bz_1)-H(\bz_2), \bz_1-\bz_3\rangle \leq \frac{\gamma}{2}\| \bz_1- \bz_3\|^2 + \frac{2C}{\gamma},$$
    i.e. Assumption~\ref{assum:field_h_distr} holds with $M = \gamma$ and $\delta = 2C/\gamma$.

    The most interesting scenario from practical point of view is the latter case of bounded field $H$, since it corresponds to objective function with bounded subgradients. However, we still formulate Assumption~\ref{assum:field_h_distr} in a more general form of inequality \eqref{delta_L_H_bz}. The corresponding Theorems \ref{theorem_mps_deterministic_distr} and \ref{theorem_mps_stochastic_distr} are formulated and proved under general condition \eqref{delta_L_H_bz}, and the Corollaries \ref{corollary_mps_deterministic_distr} and \ref{corollary_mps_stochastic_distr} are derived for the specific case of bounded field norm.
\end{remark}

We introduce the following metric $Q(\bz_1, \bz_2) = G(\bz_1)-G(\bz_2) + \langle H(\bz_2), \bz_1-\bz_2 \rangle$. 
Note, that because of convexity $G$ we have the following estimation:
$$Q(\bz_1, \bz_2) = G(\bz_1)-G(\bz_2) + \langle H(\bz_2), \bz_1-\bz_2 \rangle\geq \langle \nabla G(\bz_2) + H(\bz_2), \bz_1-\bz_2 \rangle.$$
In other words, $Q(\bz_1, \bz_2)$ is estimation above for right hand of variational inequality. If for some $z_1$ we have $Q(\bz_1,\bz)\leq 0, \forall \bz\in Z_m$ then because of convexity $G$ we have that $\bz_1$ is solution of variational inequality \eqref{vi_problem_bz}.

In this section, we consider two methods based on the mirror prox-sliding from \cite{Lan_MPS} for problem \eqref{eq:spp_penalty}. In particular, we demonstrate that "the error" $\delta$ does not accumulate in sliding method.

\vspace{0.5cm}
\noindent\textbf{{Deterministic case}}.

The first method is the mirror prox-sliding without stochastic operator (see Algorithm \ref{alg:mps_deterministic_distr}). It contains two loops: external and internal. Note, that there is only one operation (line \ref{line:mps_determ_decentr_nabla_g}) where we need to communicate between nodes. At the same time, all other operations, including solving sub-problems and updating variables can be implemented locally at the nodes. So, the total number of communication equals $\sum\limits_{k=1}^n T_k.$

\begin{algorithm}[htp]
	\caption{The mirror-prox sliding (MPS) method for problem \eqref{eq:spp_penalty}} \label{alg:mps_deterministic_distr}
	\begin{algorithmic}[1]
		\REQUIRE Initial point $\bz_0 \in Z_m$, external steps number $N$,  sequence on internal steps number $\{T^k\}_{k=1}^n$, sequences of parameters $\{\beta_k\}_{k=1}^n, \{\eta_k^t\}_{t\leq T_k, k\leq N}, \{\gamma_k\}_{k=1}^N$
  
		\STATE $\overline{\bz}_0:=\bz_0$
		\FOR{$k =1, \ldots, N$}
        \STATE Compute $\underline{\bz}_k=(1-\gamma_k)\overline{\bz}_{k-1}+\gamma_k \bz_{k-1}$ and set $\bz_k^0 = \bz_{k-1}.$
    		\FOR{$t =1, \ldots, T_k$}
                    \STATE Compute $\underline{\mathbf{G}}_k:=\nabla G(\underline{\bz}_k)$ through one communication round \label{line:mps_determ_decentr_nabla_g}
                    \STATE Solve the following problems independently in $m$ nodes for $i=\overline{1,m}$:
                    $$\tilde{z}_{i,k}^t = \argmin\limits_{z\in Z} \langle \nabla \underline{{G}}_{i,k}+H_i(z_{i,k}^{t-1}), z\rangle + \beta_k V(z_{i, k-1}, z) + \eta_k^t V(z_{i,k}^{t-1}, z),$$        
                    $$z_{i,k}^t = \argmin\limits_{z\in Z} \langle \nabla \underline{{G}}_{i,k}+H_i(\tilde{z}_{i,k}^{t-1}), z\rangle + \beta_k V(z_{i, k-1}, z) + \eta_k^t V(z_{i,k}^{t-1}, z).$$
    		\ENDFOR 
      \STATE Set $\bz_k:=\bz_k^{T_k}, \tilde{\bz}_k:=\frac{1}{T_k}\sum\limits_{t=1}^{T_k} \tilde{\bz}_k^t$ and compute $\overline{\bz}_k=(1-\gamma_k)\overline{\bz}_{k-1}+\gamma_k\tilde{\bz}_k.$
		\ENDFOR 

        \RETURN $\overline{\bz}_N.$
	\end{algorithmic}
\end{algorithm}

Further, we assume that regularization coefficients $R_\alpha, R_\beta$ are defined according to \eqref{R_alpha_beta_det}. We also introduce the following values:
$$\chi_{\bullet}=\frac{\lambda_{\max} (\mW_{\bullet}^\top \mW_{\bullet})}{\lambda_{\min}^+ (\mW_{\bullet}^\top \mW_{\bullet})},\quad \chi=\max(\chi_x,\chi_y), \quad D=\max(\|\nabla f(\bx^*)\|,\sup_{\bx\in X_m}\|\nabla F(\bx, \by^*(\bx))\|).$$
Using the result our result for this sliding in non-smooth case (see Section \ref{sec:MPS}) and results about penalization (see Section \ref{sec:spp_penalty}) we can obtain the following result about convergence.
\begin{theorem}
\label{theorem_mps_deterministic_distr}
    Let us assume that $\varepsilon \geq \delta.$ Suppose that parameters in the outer iterations of Algorithm \ref{alg:mps_deterministic_distr} are set to:
    $$\gamma_k=\frac{2}{k+1},\beta_k=\frac{2L}{k},T_k=\left\lceil\frac{kM}{L}\right\rceil, \eta_k^t=\beta_k(t-1)+\frac{LT_k}{k},$$
    where $L=\frac{2D^2}{\varepsilon} \chi.$
    In order to compute an approximate solution $\overline{\bz}_N$ such that
    \begin{equation}
    \label{quality_det}
        \begin{aligned}
        &\max_{\substack{\mW_y\by=\mathbf{0}\\ \by\in Y_m}} F(\overline{\bx}_N, \by) - \min_{\substack{\mW_x\bx=\mathbf{0}\\ \bx\in X_m}} F(\bx, \overline{\by}_N)\leq 2\varepsilon, \\
            &\|\mathbf{W}_x\overline{\bx}_N\|\leq \frac{2\varepsilon}{R_\alpha},\quad \|\mathbf{W}_y\overline{\by}_N\|\leq \frac{2\varepsilon}{R_\beta},
        \end{aligned}
    \end{equation}the number of communication round and gradient calculation of $F$ are bounded by
$$N_{\text{\# rounds}}=O\left({\frac{D\Omega_{z_0} }{\varepsilon}\sqrt{\chi}}\right),\quad N_{\nabla F}=O\left(\frac{M\Omega_{z_0}^2+D\Omega_{z_0}\sqrt{\chi}}{\varepsilon}\right),$$
respectively, where $\Omega_{z_0}^2=\sup_{z\in Z} V(z_0, z).$
\end{theorem}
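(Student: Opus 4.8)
The plan is to combine the two ingredients developed earlier: the oracle-complexity bound for MPS in the non-smooth setting (Theorem~\ref{theorem_mps_deterministic}) applied to the penalized VI \eqref{eq:spp_penalty}, and the penalization guarantees of Lemma~\ref{lem:penalty} which relate the solution of the penalized problem to the constrained one. First I would identify the problem data. The smooth part of the composite operator is $G(\bz)=\frac{R_\alpha^2}{\eps}\|\mW_x\bx\|^2-\frac{R_\beta^2}{\eps}\|\mW_y\by\|^2$, whose gradient is $\frac{2}{\eps}(R_\alpha^2\widetilde\mW_x\bx,\,-R_\beta^2\widetilde\mW_y\by)$; hence $\nabla G$ is Lipschitz with constant at most $L=\frac{2}{\eps}\max(R_\alpha^2\lambda_{\max}(\widetilde\mW_x),R_\beta^2\lambda_{\max}(\widetilde\mW_y))$. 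Plugging in the definitions \eqref{R_alpha_beta_det} of $R_\alpha,R_\beta$ together with $R$ and $\chi$, this simplifies to $L=\tfrac{2R^2}{\eps}\chi$, matching the statement. The operator $H(\bz)=(\nabla_\bx F,-\nabla_\by F)$ is the (sub)gradient operator of the convex--concave $F$; it is monotone and, being bounded-subgradient/non-smooth, satisfies \eqref{delta_L_H_bz} with the given $M$ and $\delta$. Crucially, since one evaluation of $H$ requires exactly one call of a subgradient of $F$ per node and the only inter-node operation is computing $\nabla G(\underline\bz_k)$ — one gossip round, performed $T_k$ times per outer iteration $k$ — the number of communication rounds equals $\sum_{k=1}^N T_k$ and the number of $\nabla F$ evaluations per node equals $\sum_{k=1}^N T_k$ as well (plus the $N$ outer-loop bookkeeping steps, which are communication-free).

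Next I would invoke Theorem~\ref{theorem_mps_deterministic}: with the stated parameters $\gamma_k=\tfrac{2}{k+1}$, $\beta_k=\tfrac{2L}{k}$, $T_k=\lceil kM/L\rceil$, $\eta_k^t=\beta_k(t-1)+LT_k/k$, the output $\overline\bz_N=(\overline\bx_N,\overline\by_N)$ satisfies $\sup_{\bz\in Z_m}Q(\overline\bz_N,\bz)\le\eps+\delta$ after
$$N_{\nabla G}=O\!\left(\sqrt{\tfrac{L\Omega_{z_0}^2}{\eps}}\right),\qquad N_H=O\!\left(\tfrac{M\Omega_{z_0}^2}{\eps}+\sqrt{\tfrac{L\Omega_{z_0}^2}{\eps}}\right)$$
oracle calls. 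Since $N_{\nabla G}=N$ (one $\nabla G$ per outer iteration) and $N_H=\sum_k T_k$ counts communication rounds / per-node subgradient calls, substituting $L=\tfrac{2R^2}{\eps}\chi$ gives $N=O(\sqrt{L\Omega_{z_0}^2/\eps})=O\!\big(\tfrac{R\Omega_{z_0}}{\eps}\sqrt\chi\big)$ and $\sum_k T_k=O\!\big(\tfrac{M\Omega_{z_0}^2}{\eps}+\tfrac{R\Omega_{z_0}}{\eps}\sqrt\chi\big)$, which are exactly the claimed bounds $N_{\#\text{rounds}}$ and $N_{\nabla F}$. Because $\eps\ge\delta$ by hypothesis, $\eps+\delta\le 2\eps$, so up to a constant factor $\overline\bz_N$ is an $O(\eps)$-approximate solution of the penalized saddle-point problem in the gap sense $Q$.

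The remaining step — and the one I expect to be the main obstacle — is translating the guarantee $\sup_\bz Q(\overline\bz_N,\bz)\le 2\eps$ for the \emph{penalized} problem \eqref{eq:spp_penalty} into the two stated conclusions \eqref{quality_det} about the \emph{constrained} problem: the saddle-gap bound $F(\overline\bx_N,\by)-F(\bx,\overline\by_N)\le\eps$ over feasible $(\bx,\by)$, and the infeasibility bounds $\|\mW_x\overline\bx_N\|\le 2\eps/R_\alpha$, $\|\mW_y\overline\by_N\|\le 2\eps/R_\beta$. Here I would apply Lemma~\ref{lem:penalty} separately in the $\bx$ and $\by$ variables, viewing the penalized SPP as a penalized convex minimization in $\bx$ (of $f(\bx)=\max_\by F_R$) and, symmetrically, a penalized concave maximization in $\by$. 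The $Q$-gap bound controls $F_R(\overline\bx_N,\by)-F_R(\bx,\overline\by_N)$; expanding $F_R$ and using that the choice \eqref{R_alpha_beta_det} of $R_\alpha,R_\beta$ meets the dual-size condition (statement~2 of Lemma~\ref{lem:penalty}) lets me invoke statement~1 to get $\|\mW_x\overline\bx_N\|\le 2\eps/R_\alpha$ and the drop of the penalty terms, and then statement~3 to absorb the $O(\eps)$ losses, yielding the clean bound $\eps$ in \eqref{quality_det} after rescaling the internal tolerance. Care is needed that the $\argmin$/$\argmax$ points $\bx^*,\by^*(\bx)$ used to define $R_\alpha,R_\beta$ are consistent with the feasible sets in \eqref{quality_det} and that the boundedness of $X_m,Y_m$ keeps $\Omega_{z_0}$, $R$ finite; modulo this bookkeeping, the two pieces fit together to give the theorem.
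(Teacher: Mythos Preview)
Your proposal is correct and follows essentially the same route as the paper: compute the Lipschitz constant $L=\tfrac{2R^2}{\eps}\chi$ for $\nabla G$, invoke Theorem~\ref{theorem_mps_deterministic} to obtain $\sup_{\bz}Q(\overline{\bz}_N,\bz)\le 2\eps$ together with the oracle bounds, then pass from the $Q$-gap on $F_R$ to the constrained saddle-gap and the infeasibility bounds via Lemma~\ref{lem:penalty} applied in $\bx$ and in $\by$. One point to tighten is your communication accounting, where you first write that gossip is ``performed $T_k$ times per outer iteration'' (so $\sum_k T_k$ rounds) but then identify $N_{\#\text{rounds}}$ with $N=N_{\nabla G}$; since $\underline{\bz}_k$ is fixed throughout the inner loop, $\nabla G(\underline{\bz}_k)$ needs only one gossip round per outer iteration, and it is this count ($N$, not $\sum_k T_k$) that yields the claimed $O\!\big(\tfrac{R\Omega_{z_0}}{\eps}\sqrt{\chi}\big)$ bound.
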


\begin{proof}
Firstly, note that operator $\nabla G$ in problem \eqref{eq:spp_penalty} satisfies Lipschitz condition with constant $\frac{2D^2}{\varepsilon} \chi.$

    Further, we obtain, that under conditions of Theorem \ref{theorem_mps_deterministic_distr} according to Theorem \ref{theorem_mps_deterministic} we obtain the point $\overline{\bz}_N$ such that
    \begin{equation}
    \label{eq1_theorem}
        Q(\overline{\bz}_N, \bz) = G(\overline{\bz}_N)-G(\bz) + \langle H(\bz), \overline{\bz}_N-\bz\rangle\leq 2\varepsilon, \forall \bz\in Z_m.
    \end{equation}
By the definition of operator $H$ and convexity of functions $F(\cdot, \by)$ for all $\by\in Y_m$ and $-F(\bx, \cdot)$ for all $\bx\in X_m$ we obtain that the inequality \eqref{eq1_theorem} is equivalent to the following:
\begin{equation}
\label{F_R_quality}
    F_R(\overline{\bx}_N, \by)-F_R(\bx, \overline{\by}_N)\leq 2\varepsilon, \forall \bx \in X_m, \by \in Y_m,
\end{equation}
where $F_R$ is defined in \eqref{eq:spp_penalty}. 

To conclude, the points $\overline{\bx}_N$ and $\overline{\by}_N$ satisfies condition of Lemma \ref{lem:spp_penalty} and it finalizes proof of this Theorem.

%
%
\end{proof}

Let us note $L_0 = \max_{\bz \in Z_m} \|H(\bz)\|.$ Then we can obtain that the following inequality holds:
\begin{equation}
\label{delta_L_H_bz_M}
    \langle H(\bz_1)-H(\bz_2), \bz_1-\bz_3\rangle \leq \frac{M}{2}\|\bz_1-\bz_3\|^2+\frac{2L_0^2}{M}, \forall \bz_1, \bz_2, \bz_3\in Z_m,
\end{equation}
for any positive constant $M>0.$ Note, in this case we have that $R\leq L_0$. So, taken $M=\frac{L_0^2}{2\varepsilon}$ in inequality \eqref{delta_L_H_bz_M} we can obtain from Theorem \eqref{alg:mps_deterministic_distr} the following result.
\begin{corollary}
\label{corollary_mps_deterministic_distr}
    Let us assume that $\|H(\bz)\|\leq L_0, \forall \bz\in Z_m.$ Suppose that parameters in the outer iterations of Algorithm \ref{alg:mps_deterministic_distr} are set as in Theorem \ref{theorem_mps_deterministic_distr}.
    In order to compute an approximate solution $\overline{\bz}_N$ such that \eqref{quality_det} holds, the number of communication round and gradient calculation of $F$ are bounded by
$$N_{\text{\# rounds}}=O\left({\frac{L_0\Omega_{z_0}}{\varepsilon}} \sqrt{\chi}\right),\quad N_{\nabla F}=O\left(\frac{L_0^2\Omega_{z_0}^2}{\varepsilon^2}+\frac{L_0\Omega_{z_0}\sqrt{ \chi}}{\varepsilon}\right),$$
respectively, where $\Omega_{z_0}^2=\sup_{z\in Z} V(z_0, z).$
\end{corollary}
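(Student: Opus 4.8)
The plan is to obtain Corollary~\ref{corollary_mps_deterministic_distr} directly from Theorem~\ref{theorem_mps_deterministic_distr}, used as a black box, by supplying it with the weak‑smoothness estimate \eqref{delta_L_H_bz_M} for a surrogate constant $M$ chosen to balance the error term $\delta$ against the inner‑loop length, and then replacing $R$ by $L_0$ in the resulting complexity bounds. First I would record why \eqref{delta_L_H_bz_M} holds: for any $M>0$, Cauchy--Schwarz together with $\|H(\bz)\|\le L_0$ and Young's inequality give
\begin{equation*}
\langle H(\bz_1)-H(\bz_2),\,\bz_1-\bz_3\rangle\le\|H(\bz_1)-H(\bz_2)\|\cdot\|\bz_1-\bz_3\|\le 2L_0\|\bz_1-\bz_3\|\le\frac{M}{2}\|\bz_1-\bz_3\|^2+\frac{2L_0^2}{M}.
\end{equation*}
Hence $H$ fulfils the structural condition \eqref{delta_L_H_bz} with the coefficient in front of $\|\bz_1-\bz_2\|^2$ equal to $0\le M/2$, with constant $M$, and with $\delta$ of order $L_0^2/M$, so the hypotheses on $H$ in Lemma~\ref{lemma:int_determenistic} and in Theorem~\ref{theorem_mps_deterministic_distr} are satisfied for this pair $(M,\delta)$.

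The next step is to choose $M$. The key observation is that the Lipschitz constant $L=\frac{2R^2}{\eps}\chi$ of $\nabla G$ does not depend on $M$, hence neither does the communication bound $N_{\text{\# rounds}}=O(R\Omega_{z_0}\sqrt{\chi}/\eps)$ of Theorem~\ref{theorem_mps_deterministic_distr}, whereas the number of operator calls (equivalently, of $\nabla F$ evaluations), which equals the total inner‑loop length $\sum_k T_k$ with $T_k=\lceil kM/L\rceil$, grows like $M\Omega_{z_0}^2/\eps$. Therefore I would take $M$ as small as the requirement $\delta\le\eps$ of Theorem~\ref{theorem_mps_deterministic_distr} permits, i.e. $M=\frac{L_0^2}{2\eps}$ as in the statement (one may instead take $M=L_0^2/\eps$, making $\delta=\eps$ exactly); in any case $\delta$ is of order $\eps$, so, up to the harmless relabelling of the target accuracy by a constant factor absorbed into the $O(\cdot)$, Theorem~\ref{theorem_mps_deterministic_distr} returns $\overline{\bz}_N$ satisfying \eqref{quality_det} with
\begin{equation*}
N_{\text{\# rounds}}=O\!\left(\frac{R\Omega_{z_0}}{\eps}\sqrt{\chi}\right),\qquad N_{\nabla F}=O\!\left(\frac{M\Omega_{z_0}^2+R\Omega_{z_0}\sqrt{\chi}}{\eps}\right).
\end{equation*}

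Finally I would substitute into these two bounds the already‑noted estimate $R\le L_0$ (which follows from $\|\nabla F(\bx,\by)\|=\|H(\bz)\|\le L_0$ and, by Danskin's theorem together with $\mW_x\bx^*=0$, $\nabla f(\bx^*)=\nabla_\bx F(\bx^*,\by^*(\bx^*))$) and the value $M=\frac{L_0^2}{2\eps}$, obtaining
\begin{equation*}
N_{\text{\# rounds}}=O\!\left(\frac{L_0\Omega_{z_0}}{\eps}\sqrt{\chi}\right),\qquad N_{\nabla F}=O\!\left(\frac{L_0^2\Omega_{z_0}^2}{\eps^2}+\frac{L_0\Omega_{z_0}\sqrt{\chi}}{\eps}\right),
\end{equation*}
which is precisely the assertion of the corollary. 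The one point requiring care is the balancing step: one has to notice that the surrogate constant $M$ influences only the inner‑loop length $T_k$, hence only the count of $\nabla F$ evaluations, and not the smoothness constant of the penalty $G$, hence not the number of gossip rounds; consequently the choice $M$ of order $L_0^2/\eps$ keeps the communication complexity at the optimal level of Theorem~\ref{theorem_mps_deterministic_distr} while incurring only the unavoidable extra term $L_0^2\Omega_{z_0}^2/\eps^2$ in the number of subgradient calls. Everything else is a direct substitution.
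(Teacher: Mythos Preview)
Your proposal is correct and follows essentially the same route as the paper: derive the weak-smoothness inequality \eqref{delta_L_H_bz_M} from $\|H(\bz)\|\le L_0$ via Cauchy--Schwarz and Young, set $M$ of order $L_0^2/\eps$, note $R\le L_0$, and plug into Theorem~\ref{theorem_mps_deterministic_distr}. Your write-up is in fact more careful than the paper's sketch (which is just the paragraph preceding the corollary), in particular your remark that $M$ affects only the inner-loop count and your explicit handling of the constant-factor slack between $\delta$ and~$\eps$.
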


So, we construct optimal deterministic algorithm for gradient calculations and number of communications. The convergence significantly depends on the size of the set $Z=X\times Y$ and network condition number $\chi$.

\vspace{0.5cm}
\noindent\textbf{{Stochastic case}}.

Further, we can consider the generalization of deterministic Algorithm~\ref{alg:mps_deterministic_distr} to stochastic case - Stochastic Mirror-Prox sliding (see Algorithm \ref{alg:mps_stochastic_distr}).

\begin{algorithm}[htp]
	\caption{The stochastic mirror-prox sliding (SMPS) method for problem \eqref{eq:spp_penalty}.} \label{alg:mps_stochastic_distr}
	\begin{algorithmic}[1]
		\REQUIRE Initial point $\bz_0 \in Z_m$, external steps number $N$,  sequence on internal steps number $\{T^k\}_{k=1}^n$, sequences of parameters $\{\beta_k\}_{k=1}^n, \{\eta_k^t\}_{t\leq T_k, k\leq N}, \{\gamma_k\}_{k=1}^N$
  
		\STATE Modify step 6 of Algorithm \ref{alg:mps_deterministic_distr} in the following way:
$$\tilde{z}_{i,k}^t = \argmin\limits_{z\in Z} \langle \nabla \underline{{G}}_{i,k}+\mathcal{H}_i(z_{i,k}^{t-1}; \zeta_{i,k}^{2t}), z\rangle + \beta_k V(z_{i, k-1}, z) + \eta_k^t V(z_{i,k}^{t-1}, z),$$        
$$z_{i,k}^t = \argmin\limits_{z\in Z} \langle \nabla \underline{{G}}_{i,k}+\mathcal{H}_i(\tilde{z}_{i,k}^{t-1}: \zeta_{i,k}^{2t}), z\rangle + \beta_k V(z_{i, k-1}, z) + \eta_k^t V(z_{i,k}^{t-1}, z).$$
	\end{algorithmic}
\end{algorithm}

In this case we consider the stochastic operator $\mathcal{H}.$ We will assume that the following assumption holds:
\begin{assumption}
\label{assump:h_i}
    Suppose that $\zeta_{i,k}^s$'s are independently random samples and the stochastic operator $\mathcal{H}$ satisfies unbiasedness $\mathbb{E}_{\zeta_{k}^{2t-1}} \left[\mathcal{H}\left(\bz_{k}^{t-1}; \zeta_{k}^{2t-1}\right)\right]=H(\bz_k^{t-1})$ and $\mathbb{E}_{\zeta_{k}^{2t}} \left[\mathcal{H}\left(\tilde{\bz}_k^t; \zeta_{k}^{2t}\right)\right]=H(\tilde{\bz}_k^t)$ and bounded variance $\mathbb{E}_{\zeta_{k}^{2t-1}} \left\|\mathcal{H}\left(\bz_{k}^{t-1}; \zeta_{k}^{2t-1}\right)-H(\bz_k^{t-1})\right\|^2\leq \sigma^2$ and $\mathbb{E}_{\zeta_{k}^{2t}} \left\|\mathcal{H}\left(\tilde{\bz}_k^t; \zeta_{k}^{2t}\right)-H(\tilde{\bz}_k^t)\right\|^2\leq \sigma^2$ for all $t=\overline{1, T_k}, k=\overline{1, N}$. Here $\zeta_{k}^s$ is the set of random samples $\zeta_{i,k}^s, i=\overline{1,m}$
\end{assumption}

Under this assumption, we can obtain the following result.

\begin{theorem}
\label{theorem_mps_stochastic_distr}
    Suppose that Assumption \ref{assump:h_i} holds and parameters in the outer iterations of Algorithm \ref{alg:mps_stochastic} are set to:
    $$\gamma_k=\frac{2}{k+1},\beta_k=\frac{2L}{k},T_k=\left\lceil\frac{\sqrt{3}kM}{L}+\frac{N k^2\sigma^2}{\Omega_{z_0}^2L^2}\right\rceil, \eta_k^t=\beta_k(t-1)+\frac{LT_k}{k}.$$
    where $L=\frac{2D^2}{\varepsilon} \chi, M=\frac{L_0^2}{2\varepsilon}.$
    In order to compute an approximate solution $\overline{\bz}_N$ such that the following holds with probability $1-p$:
    \begin{equation}
        \label{quality_stoch}
        \begin{aligned}
        &\max_{\substack{\mW_y\by=\mathbf{0}\\ \by\in Y_m}} F(\overline{\bx}_N, \by) - \min_{\substack{\mW_x\bx=\mathbf{0}\\ \bx\in X_m}} F(\bx, \overline{
    \by}_N)\leq 2\varepsilon, \\
            &\|\mathbf{W}_x\overline{\bx}_N\|\leq \frac{2\varepsilon}{R_\alpha},\quad \|\mathbf{W}_y\overline{\by}_N\|\leq \frac{2\varepsilon}{R_\beta},
        \end{aligned}
    \end{equation}the number of communication round and gradient calculation of $F$ are bounded by
$$N_{\text{\# rounds}}=O\left({\frac{D\Omega_{z_0}}{\varepsilon p}\sqrt{\chi}}\right),\quad N_{\nabla F}=O\left(\frac{M\Omega_{z_0}^2+D\Omega_{z_0}\sqrt{\chi}}{\varepsilon p} + \frac{\sigma^2 \Omega_{z_0}^2}{\varepsilon^2}\right),$$
respectively, where $\Omega_{z_0}^2=\sup_{z\in Z} V(z_0, z).$
\end{theorem}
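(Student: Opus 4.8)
The plan is to follow the argument in the proof of Theorem \ref{theorem_mps_deterministic_distr}, substituting the stochastic sliding guarantee of Theorem \ref{theorem_mps_stochastic} for the deterministic one of Theorem \ref{theorem_mps_deterministic} and inserting a Markov-inequality step that turns an in-expectation bound into a bound valid with probability $1-p$. As in Theorem \ref{theorem_mps_deterministic_distr}, I would first record that for the reformulation \eqref{eq:spp_penalty} the operator $\nabla G$ is Lipschitz with $L=\tfrac{2R^2}{\varepsilon}\chi$, that the non-smooth $H$ satisfies \eqref{delta_L_H_bz_M}, hence \eqref{delta_L_H}, with $M=\tfrac{L_0^2}{2\varepsilon}$, and that $R\le L_0$; the internal accuracy controlling the residual $\delta$ and the sliding target is taken to be $\Theta(\varepsilon p)$, which is the origin of the extra $1/p$ in the final complexity. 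Under Assumption \ref{assump:h_i}, Algorithm \ref{alg:mps_stochastic_distr} is Algorithm \ref{alg:mps_stochastic} applied to the variational inequality \eqref{vi_problem}; since $H$ is block-separable across the $m$ nodes, communication is needed only to form $\underline{\mathbf G}_k=\nabla G(\underline{\bz}_k)$, so the number of communication rounds equals the number $N_{\nabla G}$ of $\nabla G$-evaluations and the per-node number of $\nabla F$-evaluations equals $N_H$. Theorem \ref{theorem_mps_stochastic} then yields $\overline{\bz}_N$ with $\mathbb{E}\,\sup_{\bz\in Z_m} Q(\overline{\bz}_N,\bz)=O(\varepsilon p)$.

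The step that differs from the deterministic case is the concentration. Because $\sup_{\bz\in Z_m} Q(\overline{\bz}_N,\bz)$ is a nonnegative random variable with expectation $O(\varepsilon p)$, Markov's inequality gives $\sup_{\bz\in Z_m} Q(\overline{\bz}_N,\bz)\le O(\varepsilon)$ with probability at least $1-p$. I then condition on this event, on which the remainder is deterministic and copies the proof of Theorem \ref{theorem_mps_deterministic_distr}: by the definition of $H$ together with convexity of $F(\cdot,\by)$ and concavity of $F(\bx,\cdot)$, the $Q$-bound gives the saddle-gap estimate $F_R(\overline{\bx}_N,\by)-F_R(\bx,\overline{\by}_N)\le O(\varepsilon)$ for all $\bx\in X_m,\ \by\in Y_m$, as in \eqref{F_R_quality}; specializing $\bx=\overline{\bx}_N$ (resp. $\by=\overline{\by}_N$) and maximizing identifies $\overline{\by}_N$ (resp. $\overline{\bx}_N$) as an $O(\varepsilon)$-approximate solution of the corresponding inner problem, so Lemma \ref{lem:penalty}(1) with the regularization coefficients \eqref{R_alpha_beta_det} gives $\|\mW_x\overline{\bx}_N\|\le 2\varepsilon/R_\alpha$ and $\|\mW_y\overline{\by}_N\|\le 2\varepsilon/R_\beta$; finally, evaluating \eqref{F_R_quality} at consensus-feasible $\bx,\by$ (so that $\mW_x\bx=\mW_y\by=0$) and discarding the nonnegative penalty terms gives $F(\overline{\bx}_N,\by)-F(\bx,\overline{\by}_N)\le\varepsilon$. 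Thus \eqref{quality_stoch} holds on the good event, i.e. with probability $1-p$.

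For the complexity, with $\gamma_k=\tfrac{2}{k+1}$, $\beta_k=\tfrac{2L}{k}$, $\eta_k^t=\beta_k(t-1)+\tfrac{LT_k}{k}$ and $T_k=\bigl\lceil \tfrac{\sqrt{3}\,kM}{L}+\tfrac{Nk^2\sigma^2}{\Omega_{z_0}^2 L^2}\bigr\rceil$, Theorem \ref{theorem_mps_stochastic} gives $N_{\nabla G}=O\bigl(\sqrt{L\Omega_{z_0}^2/(\varepsilon p)}\bigr)$ and $N_H=O\bigl(M\Omega_{z_0}^2/(\varepsilon p)+\sigma^2\Omega_{z_0}^2/(\varepsilon p)+\sqrt{L\Omega_{z_0}^2/(\varepsilon p)}\bigr)$; substituting $L=\tfrac{2R^2}{\varepsilon}\chi$, $M=\tfrac{L_0^2}{2\varepsilon}$ and $R\le L_0$, and recalling $N_{\text{\# rounds}}=N_{\nabla G}$ and $N_{\nabla F}=N_H$, yields bounds of the stated form, in direct analogy with Corollary \ref{corollary_mps_deterministic_distr}.

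I expect the main obstacle to lie in the interplay between $\varepsilon$, $p$, $\delta$, $M$ and the penalty scale: the internal accuracy must be set small enough, of order $\varepsilon p$, that after losing the factor $1/p$ in Markov's step the conditional $Q$-gap is still $O(\varepsilon)$ and in particular does not exceed the penalty scale used in \eqref{eq:spp_penalty}, so that Lemma \ref{lem:penalty}(1) applies and returns exactly the consensus bounds $2\varepsilon/R_\alpha$, $2\varepsilon/R_\beta$ and the $\varepsilon$-accurate saddle gap of \eqref{quality_stoch}; at the same time the ``good event'' must be phrased through the single random variable $\sup_{\bz} Q(\overline{\bz}_N,\bz)$ so that the $F_R$-gap bound, both consensus-violation bounds and the final feasible-point inequality all hold simultaneously on it. Once these dependencies are pinned down, the translation of the $Q$-bound into the saddle gap, the application of the penalty lemma, and the substitution into the complexity formulas are all routine and identical in form to the deterministic case.
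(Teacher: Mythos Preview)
Your proposal is correct and follows essentially the same approach as the paper: run the stochastic sliding to an in-expectation accuracy of order $p\varepsilon$, apply Markov's inequality to obtain $\sup_{\bz\in Z_m}Q(\overline{\bz}_N,\bz)\le\varepsilon$ with probability $1-p$, and then repeat verbatim the proof of Theorem~\ref{theorem_mps_deterministic_distr} on that event. The paper's own proof is in fact just those three sentences, so your outline is already more detailed than what appears there.
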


\begin{proof}
    We have that if Algorithm \ref{alg:mps_stochastic} works with parameters from theorem then the output point $\overline{\bz}_N$ has quality $\mathbb{E}\sup_{\bz\in Z_n} Q(\overline{\bz}_N, \bz)\leq p\varepsilon.$ So, with according to Markov's inequality the point $\overline{\bz}_n$ is such that $\sup_{\bz\in Z_n} Q(\overline{\bz}_N, \bz)\leq \varepsilon$ with probability $1-p.$ Further, the proof repeats proof of Theorem \ref{theorem_mps_deterministic_distr}.  
\end{proof}

Similarly, we can obtain the following corollary.

\begin{corollary}
\label{corollary_mps_stochastic_distr}
    Let us assume that $\|H(\bz)\|\leq L_0, \forall \bz\in Z_m.$  Suppose that Assumption \ref{assump:h_i} holds and parameters in the outer iterations of Algorithm \ref{alg:mps_stochastic_distr} are set as in Theorem \ref{theorem_mps_stochastic_distr}.
    In order to compute an approximate solution $\overline{\bz}_N$ such that \eqref{quality_stoch} holds with probability $1-p$, the number of communication round and gradient calculation of $F$ are bounded by
$$N_{\text{\# rounds}}=O\left({\frac{L_0\Omega_{z_0}}{\varepsilon p}}\sqrt{\chi}\right),\quad N_{\nabla F}=O\left(\frac{L_0^2\Omega_{z_0}^2}{\varepsilon^2 p}+\frac{\sigma^2 \Omega_{z_0}^2}{\varepsilon^2 p}+\frac{L_0\Omega_{z_0}\sqrt{\chi}}{\varepsilon p}\right),$$
respectively, where $\Omega_{z_0}^2=\sup_{z\in Z} V(z_0, z).$
\end{corollary}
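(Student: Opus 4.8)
\textbf{Proof proposal for Corollary \ref{corollary_mps_stochastic_distr}.}

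The plan is to derive Corollary \ref{corollary_mps_stochastic_distr} from Theorem \ref{theorem_mps_stochastic_distr} in exactly the same way that Corollary \ref{corollary_mps_deterministic_distr} was derived from Theorem \ref{theorem_mps_deterministic_distr}, namely by exploiting the alternative bound \eqref{delta_L_H_bz_M} on the operator $H$ that is available whenever $\|H(\bz)\|\le L_0$ on $Z_m$. First I would recall that for a monotone operator with $\|H(\bz)\|\le L_0$ one has, by Cauchy--Schwarz and Young's inequality, $\langle H(\bz_1)-H(\bz_2),\bz_1-\bz_3\rangle\le \|H(\bz_1)-H(\bz_2)\|\cdot\|\bz_1-\bz_3\|\le 2L_0\|\bz_1-\bz_3\|\le \frac{M}{2}\|\bz_1-\bz_3\|^2+\frac{2L_0^2}{M}$ for any $M>0$, which is precisely \eqref{delta_L_H_bz_M} (up to the constant in the remainder term, which is absorbed in the $O(\cdot)$ notation). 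This means the abstract condition \eqref{delta_L_H_bz} is satisfied with the ``smoothness'' parameter $M$ set to any value we like, at the cost of an additive error term $\delta = O(L_0^2/M)$.

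Next I would make the choice $M=\frac{L_0^2}{2\varepsilon}$, exactly as indicated in the passage preceding Corollary \ref{corollary_mps_deterministic_distr}. With this choice the induced error is $\delta = O(L_0^2/M) = O(\varepsilon)$, so the hypothesis $\varepsilon\ge\delta$ of Theorem \ref{theorem_mps_stochastic_distr} holds (with $\varepsilon$ replaced by a suitable constant multiple), and the stochastic Theorem \ref{theorem_mps_stochastic_distr} applies verbatim. Substituting $M=\frac{L_0^2}{2\varepsilon}$ into the communication and gradient-call bounds of Theorem \ref{theorem_mps_stochastic_distr} gives
\[
N_{\text{\# rounds}}=O\!\left(\frac{R\Omega_{z_0}}{\varepsilon p}\sqrt{\chi}\right),\qquad
N_{\nabla F}=O\!\left(\frac{L_0^2\Omega_{z_0}^2}{\varepsilon^2 p}+\frac{(R+\sigma)\Omega_{z_0}\sqrt{\chi}}{\varepsilon p}\right),
\]
and then using $R\le L_0$ (noted just before Corollary \ref{corollary_mps_deterministic_distr}) to replace $R$ by $L_0$ in both displays yields the stated bounds. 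The factor $p$ in the denominators comes, as in the proof of Theorem \ref{theorem_mps_stochastic_distr}, from running the stochastic algorithm to expected accuracy $p\varepsilon$ and then invoking Markov's inequality to get accuracy $\varepsilon$ with probability $1-p$; the quadratic $p^2$ in the first gradient term arises because $T_k$ depends on $N$ through the variance term and $N$ itself scales like $1/p$.

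The only genuinely delicate point — and the one I would spend the most care on — is bookkeeping the interplay between the three error sources in the stochastic sliding: the penalization error (controlled by the choice of $R_\alpha,R_\beta$ via Lemma \ref{lem:penalty}), the non-smoothness error $\delta=O(L_0^2/M)$ coming from \eqref{delta_L_H_bz_M}, and the stochastic error $O(\sigma^2\Omega_{z_0}^2/\varepsilon)$ from Theorem \ref{theorem_mps_stochastic}/Lemma \ref{lemma:mps_stochastic}. One must check that the choice $M=\Theta(L_0^2/\varepsilon)$ keeps $\delta=\Theta(\varepsilon)$ so that ``error does not accumulate'' (the constant multiplying $\delta$ in Lemma \ref{lemma:mps_stochastic} is at most $1$, as remarked in the proof of Theorem \ref{theorem_mps_stochastic}), and that the final accuracy guarantee \eqref{quality_stoch} — both the functional gap and the two constraint-violation bounds $\|\mW_x\overline{\bx}_N\|\le 2\varepsilon/R_\alpha$, $\|\mW_y\overline{\by}_N\|\le 2\varepsilon/R_\beta$ — follows from the bound on $Q(\overline{\bz}_N,\bz)$ exactly as in the proof of Theorem \ref{theorem_mps_deterministic_distr}. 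Since all of these steps are already carried out there, the proof of the corollary reduces to substitution and the two elementary inequalities $M=L_0^2/(2\varepsilon)$ and $R\le L_0$.
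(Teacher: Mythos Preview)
Your proposal is correct and follows exactly the approach the paper intends: the paper gives no explicit proof of this corollary, only the remark ``Similarly, we can obtain the following corollary,'' meaning one repeats the passage from Theorem \ref{theorem_mps_deterministic_distr} to Corollary \ref{corollary_mps_deterministic_distr} (choose $M=\Theta(L_0^2/\varepsilon)$ via \eqref{delta_L_H_bz_M} so that $\delta=O(\varepsilon)$, then use $R\le L_0$) but starting from Theorem \ref{theorem_mps_stochastic_distr}. Your observation about the $p^2$ is the right refinement: since the Markov step requires expected accuracy $p\varepsilon$, the non-accumulation condition forces $\delta\le p\varepsilon$, hence $M=\Theta(L_0^2/(p\varepsilon))$, which after substitution into the $M\Omega_{z_0}^2/(\varepsilon p)$ term of Theorem \ref{theorem_mps_stochastic_distr} produces $L_0^2\Omega_{z_0}^2/(\varepsilon^2 p^2)$ --- so your intermediate display with a single $p$ should already carry $p^2$ in the first term, but your subsequent explanation resolves this.
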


We can see that stochastic mirror-prox sliding does not accumulate the error $\delta$ for non-smooth function $F$ and variance $\sigma$ because of well-defined parameters.

\section{Discussion}\label{sec:discussion}

In this work, it was shown that the proposed Mirror-prox sliding framework can be applied to solving distributed non-smooth problems. This was achieved by replacing consensus restrictions with corresponding penalty functions. However, we note that in practice the approach of transition to dual constraints is used \cite{dual_approach_1}. Thus, the internal maximization subproblem can be replaced by a function. Note that in this way we can get rid of all consensus restrictions and reduce the saddle point problem to a minimization problem. However, the duality approach has a significant drawback. Namely, it is required to calculate the values of dual functions quite efficiently, and strong duality must also hold. The penalization approach proposed in this article does not have such disadvantages.

Another advantage of the proposed approach is the use of Bregman divergence. This may allow tuning for different geometry of the optimization problem (see \cite{bregman_orig}, \cite{bregman_example}).

However, the proposed approach has a typical problem. To run it, an estimate for the constants $L$ and $M,$ is required, as well as adjustment for the regularization parameters $R_{\alpha}, R_{\beta}.$ In addition, in the stochastic case, to achieve the required accuracy, an estimate of the gradient variance $\sigma^2$ and set size $\Omega_{z_0}$.

\section{Conclusion}\label{sec:conclusion}

In this work, we propose the optimal algorithm for decentralized saddle-point problem with non-smooth convex-concave object function. The main idea of the optimal approach is to use recently proposed in the work \cite{Lan_MPS} mirror-prox sliding for penalized initial problem with affine constraints. 

Firstly, proposed in work \cite{Lan_MPS} mirror-prox sliding is proven to converge to solution for non-smooth operator $H$. We proposed the generalization of Lipschitz condition for operator $H$ that depend on new parameter $\delta$. We obtain that the error $\delta$ in inexact setup \eqref{delta_L_H} does not accumulate by sliding. It means that the mirror-prox sliding method can be applied to a much wider class of optimization problem. 

Secondly, we provide analysis of well-known penalization technique for saddle-point problem. We obtain the conditions for penalization parameters to solve the initial problem with given accuracy to function value and to consensus constraints.

Finally, the proposed approach was considered in both deterministic and stochastic cases. We show that in both cases it finds $\varepsilon$-optimal point through $O\left(\frac{\sqrt{\chi}}{\varepsilon}\right)$ communications steps totally and $O\left(\frac{1}{\varepsilon^2}\right)$ gradient calls per node. Note, these estimations are optimal and can not be improved.

Thus, this work contains optimal distributed algorithm for enough wide and complex class of saddle-point problems without Lipschitz gradient and without conditions of strong convexity or strong concavity.

\bibliography{biblio_new}

\end{document}